\DeclareMathOperator*{\argmin}{argmin}
\newcommand{\trace}{\text{trace}}
\newcommand{\bq}{\begin{equation}}
\newcommand{\eq}{\end{equation}}
\newcommand{\R}{\mathbb{R}}
\newcommand{\Dt}{\mathcal{D}}
\newcommand{\abs}[1]{\left\vert#1\right\vert}
\newcommand{\MA}{{Monge-Amp\`ere}\xspace}
\newcommand{\G}{\mathcal{G}}
\newcommand{\bO}{\mathcal{O}}
\newcommand{\xv}{\mathbf{x}}
\newcommand{\ev}{\mathbf{e}}
\newcommand{\nv}{\mathbf{n}}
\newcommand{\xo}{\mathbf{x}_0}
\newcommand{\uk}{u^k}
\newcommand{\ukp}{u^{k+1}}
\newcommand{\ckp}{c^{k+1}}
\newcommand{\Yk}{Y^k}
\newcommand{\phik}{\phi^k}
\newcommand{\one}{\mathds{1}}
\newtheorem{theorem}{Theorem}
\theoremstyle{lemma}
\newtheorem{lemma}[theorem]{Lemma}
\newtheorem{definition}{Definition}
\theoremstyle{remark}
\newtheorem{example}{Example}
\newtheorem{remark}{Remark}
\begin{document}

\title[Numerics for Monge-Amp\`ere with transport boundary conditions]{A numerical method for the elliptic Monge-Amp\`ere equation with transport boundary conditions}

\author{Brittany D. Froese
        }
        \thanks{Department of Mathematics, Simon Fraser University, 8888 University 
        Drive, Burnaby, BC, 
        V5A 1S6, Canada ({\tt bdf1@sfu.ca}).}

\begin{abstract}
The problem of optimal mass transport arises in numerous applications including image registration, mesh generation, reflector design, and astrophysics.  One approach to solving this problem is via the Monge-Amp\`ere equation.  While recent years have seen much work in the development of numerical methods for solving this equation, very little has been done on the implementation of the transport boundary condition.  In this paper, we propose a method for solving the transport problem by iteratively solving a Monge-Amp\`ere equation with Neumann boundary conditions.  To enable mappings between variable densities, we extend an earlier discretization of the equation to allow for right-hand sides that depend on gradients of the solution [Froese and Oberman, \emph{SIAM J. Numer. Anal.}, 49 (2011) 1692--1714].  This discretization provably converges to the viscosity solution.  The resulting system is solved efficiently with Newton's method.  We provide several challenging computational examples that demonstrate the effectiveness and efficiency ($\mathcal{O}(M)-\mathcal{O}(M^{1.3})$ time) of the proposed method.
\end{abstract}

\date{\today}    
%\subjclass[1991]{}
\keywords{Monge-Amp\`ere equation, Optimal transport, Elliptic partial differential equations, Finite difference methods, Viscosity solutions 
}
\maketitle

\section{Introduction}\label{sec:intro}
In this article, we propose a method for solving the elliptic \MA equation for a convex function $u$ on a domain $X \in \R^d$ subject to the transport condition $\nabla u:X \to Y$ where $Y$ is a connected set in $\R^d$.  The method involves solving a sequence of \MA equations with Neumann boundary conditions.  We also describe an efficient finite difference method for solving these sub-problems.  To demonstrate the capabilities of this solution method, we present computational results for several challenging numerical examples, which include the recovery of inverse maps, mapping onto unbounded density functions, mapping from a disconnected domain, and mapping onto non-convex sets.

\subsection{$L^2$ optimal transport}\label{sec:optimaltransport}
The motivation for this work is the problem of optimal mass transport~\cite{Ambrosio,EvansSurvey,Villani}.  The problem originally considered by Monge is how to transport a given pile of sand into a hole in the most cost efficient way.  Monge originally considered a cost equal to the magnitude of the distance the sand must be transported.  More generally, the Monge-Kantorovich mass transport problem is to find a mapping $s(x)$ that takes the density $f(x)$ in the space $X\in\R^d$ to the density $g(y)$ in the space $Y\in\R^d$ and that minimizes the cost functional
\[ I[s] = \int\limits_X c(x,s(x))\,dx \]
where $c(x,y)$ denotes the cost of transporting a unit of mass from the point $x\in X$ to the point $y\in Y$.  Here the data must satisfy the condition that total mass is conserved:
\[ \int\limits_X f(x)\,dx = \int\limits_Y g(y)\,dy. \]

The problem of optimal mass transport arises in a number of important applications including
image registration~\cite{Haker, HakerRegistration, RehmanRegistration},
 mesh generation~\cite{Delzanno, DelzannoGrid, Budd}, 
 reflector design~\cite{GlimmOlikerReflectorDesign,GlimmOliker2Reflectors}, 
 and astrophysics (estimating the shape of the early universe)~\cite{FrischUniv}.  
 
Kantorovich contributed to the understanding of optimal transport by reformulating the problem as a linear program and describing a simple dual formulation~\cite{Kantorovich1,Kantorovich2}.  While this has made many theoretical questions easier to answer, this approach also effectively doubles the dimension of the problem.  Consequently, computing the solution to even a small-scale problem is prohibitively expensive.  This motivates the development of more sophisticated methods that will enable the efficient computation of optimal maps.
 
In the special case of the quadratic cost function
\[ c(x,y) = \frac{1}{2}\abs{x-y}^2, \]
the problem has a special structure.  In this situation, the optimal mapping $s(x)$ can be expressed as the gradient of a convex function~\cite{EvansSurvey,Rockafellar}.  The problem of obtaining the optimal mapping is then equivalent to the problem of solving a fully nonlinear partial differential equation (PDE) known as the elliptic \MA equation
\bq
\label{eq:MA}\tag{MA}
\det(D^2u(x)) = f(x)/g(\nabla u(x)), \quad x\in X
\eq
subject to the transport condition
\bq
\label{eq:bc}\tag{BC}
\nabla u:X\to Y
\eq
and the convexity constraint
\bq
\label{eq:conv}\tag{C}
u \text{ is convex.}
\eq

\begin{remark}
Any solution method for the \MA equation must enforce the convexity constraint, which is necessary to ensure a unique solution.  
\end{remark}

\subsection{Related works}\label{sec:related}
In the past few years, the numerical solution of the \MA equation has received quite a bit of attention.  However, most of the available methods enforce Dirichlet, Neumann, or periodic boundary conditions rather than the transport condition that arises in many applications.

An early work by Oliker and Prussner~\cite{olikerprussner88} presented a method that converges to the Aleksandrov solution of the \MA equation in two dimensions.  Another convergent two-dimensional method was described by Oberman~\cite{ObermanEigenvalues}; this discretization converges to the viscosity solution of the equation.  Other recent methods, which perform best when solutions are sufficiently regular, have been developed by Dean and Glowinski~\cite{DGaug,DGnum2008, GlowinksiICIAM} and Feng and Neilan~\cite{FengMA, FengFully}. 

Recently, the author, together with co-authors, has extended the work of Oberman~\cite{ObermanSINUM,ObermanEigenvalues} to construct finite difference solvers that converge to the viscosity solution in any spatial dimension~\cite{BeFrObMA,FOTheory,FONum}.  These methods perform quickly even in the most singular examples.

Much less work has been done on the implementation of the transport boundary condition~\eqref{eq:bc}.  A fluid flow approach was introduced by Benamou and Brenier~\cite{BenBren} and has been further developed by Haber, Rehman, and Tannenbaum~\cite{HaberTransport}.  However, this approach is computationally expensive as it requires introducing an additional dimension to the problem.  We also mention the work by Finn, Delzanno, and Chac{\'o}n~\cite{DelzannoGrid}, which enables the mapping of a square to a region with four (possibly curved) sides.  For the related problem of optimal transport (or partial transport) with cost given by the distance $c(x,y) = \abs{x-y}$, a finite element method has been constructed by Barrett and Prigozhin~\cite{BarPrigL1Transport,BarPrigL1PartialTransport}.

\subsection{Contents}\label{sec:contents}
In \autoref{sec:analysis} of this work, we review some analysis---including weak solutions and regularity results for the \MA equation---that inform the approach taken in this paper.  In \autoref{sec:transportbc}, we describe our method for implementing the transport boundary conditions.  In \autoref{sec:discretize}, we provide a discretization of the \MA equation and prove that it converges to the viscosity solution.  In \autoref{sec:implementation}, we provide further details about the numerical implementation of our method.  In \autoref{sec:resultsMA}, we provide computational results that test our \MA solver.  In \autoref{sec:resultsTransport}, we provide computational results for several challenging and representative transport problems.  In \autoref{sec:conclude}, we summarize the main contributions of this work.

\section{Analysis and weak solutions}\label{sec:analysis}
In this section, we review some regularity results for the \MA equation that are needed to fully explain the approach taken in this work.

\subsection{Weak solutions}\label{sec:weak}
The \MA equation is a second order PDE, so classical solutions of this equation should have at least two continuous derivatives.  However, these classical $C^2$ solutions do not always exist.  Thus it is necessary to use some notion of weak solution to make sense of non-smooth solutions of the \MA equation.  

Weak solutions of the \MA equation can be defined in different ways.  The discretization used in this work is motivated by the \emph{viscosity solution} (which, in most cases, is equivalent to the more general \emph{Aleksandrov solution}).

We recall the definition of viscosity solutions~\cite{CIL}, which are defined for the \MA equation in~\cite{Gutierrez}. 
\begin{definition}
Let $u \in C(X)$ be convex and $f\geq0$ be continuous.  The function $u$ is a \emph{viscosity subsolution (supersolution)} of the \MA equation in $X$ if whenever convex $\phi\in C^2(X)$ and $x_0\in X$ are such that $(u-\phi)(x)\leq(\geq)(u-\phi)(x_0)$ for all $x$ in a neighborhood of $x_0$, then we must have
\[ \det(D^2\phi(x_0)) \geq(\leq)f(x_0). \]
The function $u$ is a \emph{viscosity solution} if it is both a viscosity subsolution and supersolution.
\end{definition}

\begin{example}[Viscosity solution of \MA]
For concreteness, we provide a particular example of a function that, though not a classical $C^2$ solution of the \MA equation, can be understood as a viscosity solution.  Consider~\eqref{eq:MA} with solution and $f$ given by
\[ 
u(\xv) = \frac{1}{2}((\abs{\xv}-1)^+)^2, 
\qquad
f(\xv) = (1-1/\abs{\xv})^+. \]
This function $u$ (pictured in \autoref{fig:viscosity}) is a viscosity solution---but not a classical $C^2$ solution---of the \MA equation.

We verify the definition of a viscosity solution.  This only needs to be done at points where $\abs{\xo}=1$ (since $u$ is locally $C^2$ away from this circle).  We note that $f$ is equal to zero on this circle.

We begin by checking convex $C^2$ functions $\phi \leq u$ with $\phi(\xo) = u(\xo) = 0$ (that is, $u-\phi$ has a local minimum here).  
Since $\nabla u(\xo) = 0$, we require $\nabla \phi(\xo) = 0$ as well.  Since $u$ is constant in part of any neighborhood of $\xo$, any convex $\phi$ must also be constant in this part of the neighborhood in order to ensure that $u-\phi$ has a local minimum.  This means that $\phi$ has zero curvature in some directions, so that $\det D^2 \phi(\xo) = 0$, as required by the definition of the viscosity solution.  We conclude that $u$ is a supersolution of the \MA equation.

We also need to check functions $\phi \geq u$ with $\phi(\xo) = u(\xo) = 0$ (so that $u-\phi$ has a local maximum).  Since $\phi$ is convex, it will automatically satisfy the condition $\det D^2\phi(\xo) \geq 0$ and we conclude that $u$ is a subsolution. 
\end{example}

\begin{figure}[htdp]
	\centering
        {\includegraphics[width=.4\textwidth]{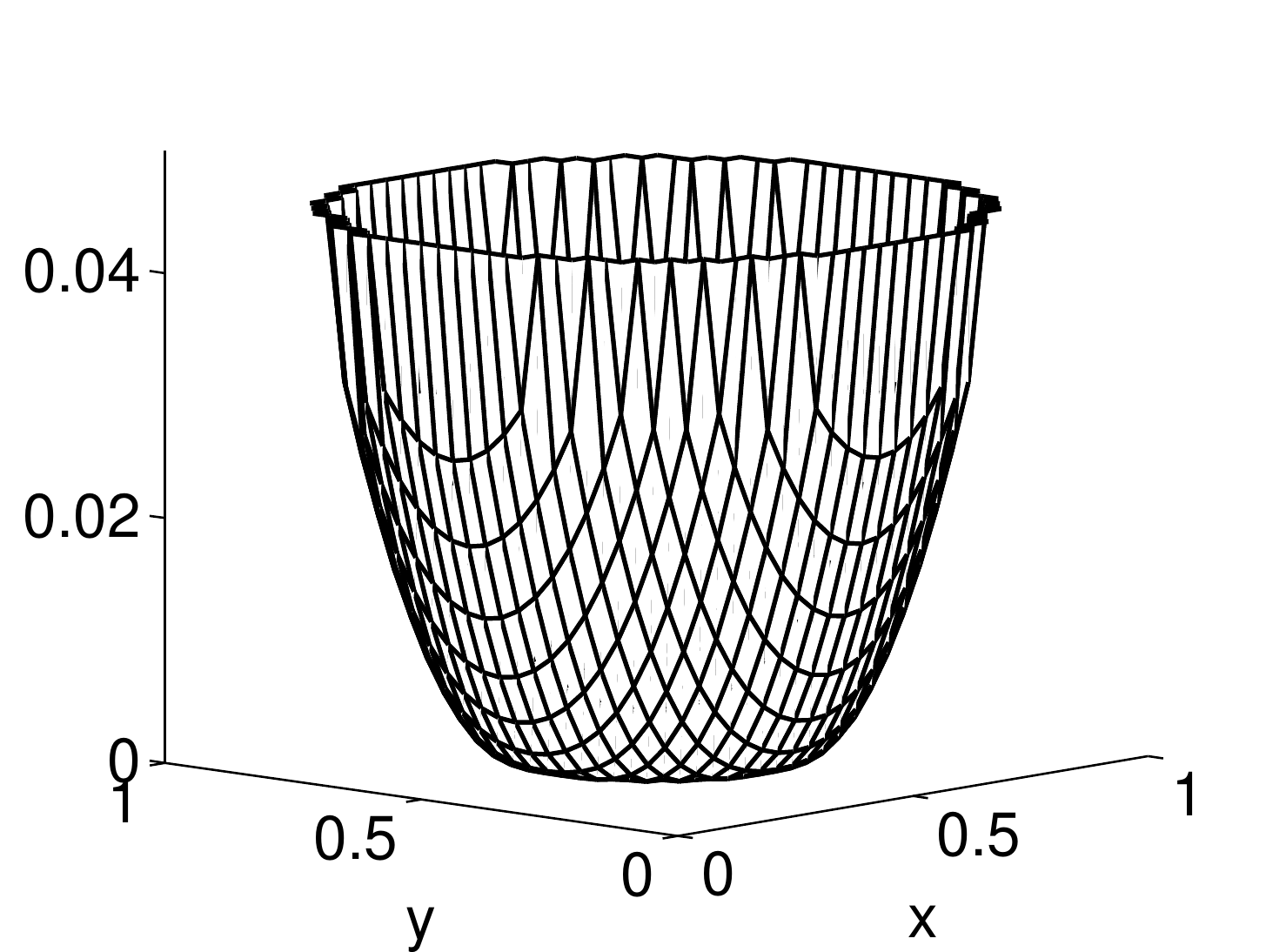}}
  	\vspace*{-10pt}\caption{A viscosity solution of the \MA equation.}
  	\label{fig:viscosity}
\end{figure}

\subsection{Regularity}\label{sec:reg}
We now review the regularity we can expect for solutions of the $L^2$ optimal transport problem.  These results are due to Caffarelli~\cite{CafBdyReg,CafRegMap,CafBdyReg2}.

We begin by noting that, as with the \MA equation, solutions of the transport problem need not be smooth.  An example of a singular solution (see \autoref{fig:singularmap}) is the problem of mapping the circle
\[ X = \{(x_1,x_2)\mid x_1^2+x_2^2<1\} \]
onto the disconnected set
\begin{multline*} Y = \{(x_1,x_2)\mid x_1 < -0.25,(x_1+0.25)^2+x_2^2 < 1 \} \\ \cup \{(x_1,x_2)\mid x_1 > 0.25,(x_1-0.25)^2+x_2^2 < 1 \}.\end{multline*}

In fact, the solution remains singular even if the disconnected region $Y$ is approximated by a connected region $Y_\epsilon$.

While we do not solve the problem of mapping onto a disconnected region, we are able to solve for the inverse mapping (which takes the disconnected set $Y$ to the connected set $X$) in~\S\ref{sec:exSplitCircle}.  

\begin{figure}[htdp]
	\centering
        {\includegraphics[width=0.9\textwidth]{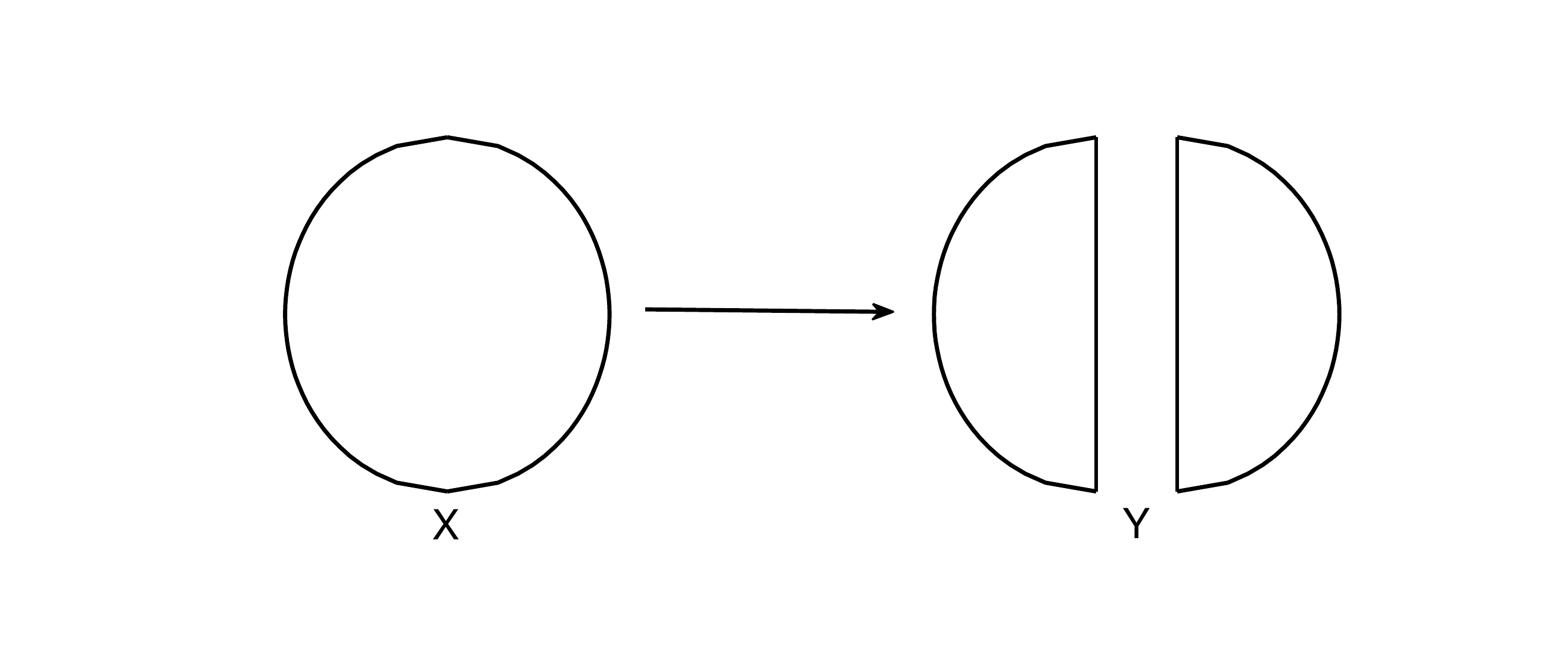}}
  	\vspace*{-30pt}\caption{A transport problem with a singular solution.}
  	\label{fig:singularmap}
\end{figure} 

As long as the sets $X$,$Y$ are bounded, we are at least guaranteed that the solution of the \MA equation is differentiable almost everywhere with bounded gradient.

\begin{remark}
When the solution to the \MA equation is not differentiable, the map is given by the sub-gradient rather than the gradient.  This allows a single point to be mapped onto a region rather than a single point.
\end{remark}

More regularity is guaranteed if we restrict ourselves to convex target sets $Y$.
\begin{theorem}[Interior Regularity]\label{thm:intreg}
Suppose that $X,Y$ are bounded, connected, open sets and $Y$ is convex.  Suppose also that the density functions
\[ f:X\to(0,+\infty),\, g:Y\to(0,+\infty) \]
are bounded away from 0 and $+\infty$.  Then the solution of the \MA equation~\eqref{eq:MA},~\eqref{eq:bc},~\eqref{eq:conv} belongs to $C^{1,\alpha}_{loc}(X)$ for some $0<\alpha<1$.

If, in addition, the density functions $f,g\in C^\beta$ for some $0<\beta<1$ then the solution of \MA belongs to $C^{2,\alpha}_{loc}(X)$ for every $0<\alpha<\beta$.
\end{theorem}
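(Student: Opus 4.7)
The plan is to follow Caffarelli's strategy, whose heart is establishing \emph{strict convexity} of the potential $u$ in the interior of $X$; everything else flows from standard Monge-Amp\`ere regularity machinery applied after suitable affine renormalizations.

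The first and hardest step is strict convexity. Suppose for contradiction that $u$ is affine along a nontrivial segment, so that the contact set $\Sigma = \{x \in X : u(x) = \ell(x)\}$ for some supporting affine function $\ell$ contains more than a single point. The sub-gradient $\partial u(\Sigma)$ is then a single point (or, more generally, a lower-dimensional face of the graph of $u^*$). The key geometric lemma of Caffarelli says: because $Y = \partial u(X)$ is \emph{convex}, any exposed face of $\Sigma$ that reaches the interior of $X$ must be mapped to a point lying in the interior of $Y$; but then a short Alexandrov-measure / mass-balance computation, using the lower bound on $f$ on a cylinder around $\Sigma$ and the upper bound on $g$ near $\partial u(\Sigma)$, yields a contradiction with $\int_E f = \int_{\partial u(E)} g$. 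This is where convexity of $Y$ is used in an essential and non-removable way; it is the main obstacle.

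Once $u$ is strictly convex, the sections
\[
S_h(x_0) = \{x \in X : u(x) < \ell_{x_0}(x) + h\}
\]
are bounded open convex sets compactly contained in $X$ for small $h$, where $\ell_{x_0}$ is a supporting plane at $x_0$. By John's lemma each section is comparable to an ellipsoid, and one can normalize by an affine transformation $T_h$ so that $B_1 \subset T_h(S_h) \subset B_d$. The rescaled potential $\tilde u(x) = (\det T_h)^{2/d}[u(T_h^{-1}x) - \ell_{x_0}(T_h^{-1}x) - h]$ solves a Monge-Amp\`ere equation with right-hand side bounded between the same positive constants as $f/g$. Caffarelli's $C^{1,\alpha}$ regularity theorem for strictly convex Alexandrov solutions with bounded RHS applies to $\tilde u$, and unwinding the normalization gives $u \in C^{1,\alpha}_{\mathrm{loc}}(X)$ for some $\alpha \in (0,1)$ depending on the bounds on $f,g$.

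For the second conclusion, with $f, g \in C^{\beta}$, the $C^{1,\alpha}$ regularity already proved makes the right-hand side $x \mapsto f(x)/g(\nabla u(x))$ Hölder continuous on compact subdomains. Strict convexity plus boundedness of the density ratio also yields local bounds on $D^2 u$ (via the Pogorelov estimate applied in each normalized section), so on any $X' \Subset X$ the equation is uniformly elliptic with Hölder data. Evans-Krylov then promotes $u$ to $C^{2,\alpha'}_{\mathrm{loc}}$ for some $\alpha' > 0$, and a standard Schauder bootstrap on the linearized equation $\mathrm{cof}(D^2 u) : D^2 v = \tilde f$ pushes the exponent up to any $\alpha < \beta$, giving $u \in C^{2,\alpha}_{\mathrm{loc}}(X)$ as claimed.
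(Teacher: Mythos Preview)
The paper does not prove this theorem at all: it is stated in \S\ref{sec:reg} purely as background, attributed to Caffarelli with citations to \cite{CafBdyReg,CafRegMap,CafBdyReg2}, and no argument is supplied. So there is no ``paper's own proof'' to compare against.

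That said, your sketch is a faithful outline of Caffarelli's actual argument, which is what the paper is citing. The three stages you identify --- strict convexity from convexity of the target $Y$ and the two-sided density bounds, then $C^{1,\alpha}$ via affine renormalization of sections, then $C^{2,\alpha}$ via Pogorelov/Evans--Krylov/Schauder once the right-hand side $f(x)/g(\nabla u(x))$ is itself H\"older --- are correct and in the right logical order. One small point worth tightening: in the strict-convexity step you describe the contact set as a segment along which $u$ is affine; Caffarelli's argument actually shows that if $u$ fails to be strictly convex then the contact set $\Sigma$ must have an \emph{extremal point} on $\partial X$, and it is the tension between this and the mass-balance estimate (using convexity of $Y$ to control where $\partial u(\Sigma)$ can land) that yields the contradiction. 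Your description captures the flavor but slightly blurs which geometric fact is doing the work. Otherwise the proposal is a sound summary of the cited results.
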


If both sets $X,Y$ are uniformly convex, we can obtain regularity up to the boundary as well.
\begin{theorem}[Boundary Regularity]\label{them:bdyreg}
Suppose, in addition to the hypotheses of \autoref{thm:intreg}, that the sets $X$ and $Y$ are uniformly convex.  Then the solution of \MA is in $C^{2,\alpha}(\bar{X})$ for some $0<\alpha<1$.
\end{theorem}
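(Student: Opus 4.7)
The plan is to bootstrap from the interior result of \autoref{thm:intreg} using Caffarelli's boundary technology, in which the new hypothesis of uniform convexity of both $X$ and $Y$ plays the decisive role. I would proceed in three stages: strict convexity of $u$ up to the boundary, a boundary $C^{1,\alpha}$ estimate via normalized sections, and a final bootstrap to $C^{2,\alpha}(\bar X)$ via a boundary Schauder estimate.

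First, I would rule out any line segment $\Sigma \subset \bar X$ on which $u$ agrees with an affine function $\ell$. On such a segment $\nabla u \equiv \nabla \ell$ is constant, so the transport map collapses a set of positive Lebesgue measure onto a single point of $\bar Y$; combined with the identity $f(x) = g(\nabla u(x))\det D^2 u(x)$ and the upper bound on $g$, this yields a contradiction. Convexity of $Y$ is used to ensure $\nabla u(X) \subset \bar Y$ throughout. This gives strict convexity of $u$ on $\bar X$, which already implies continuity of $\nabla u$ up to $\partial X$.

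Next, for $x_0 \in \partial X$ I would study the boundary sections $S_h(x_0) = \{x \in \bar X : u(x) < \ell_{x_0}(x) + h\}$, where $\ell_{x_0}$ is a supporting hyperplane of $u$ at $x_0$. The geometric heart of the argument is Caffarelli's boundary localization lemma: uniform convexity of $X$ together with uniform convexity of $Y$ (the latter entering through the Legendre dual $u^*$, whose sections correspond to the $\nabla u$-image of the $S_h(x_0)$) forces $S_h(x_0)$, after an affine normalization of bounded eccentricity, to be comparable to a half-ball of radius $\sim h^{1/2}$, with constants independent of $x_0$ and $h$. Feeding this uniform section geometry into the \MA equation and iterating on dyadically shrinking sections produces the desired oscillation decay of $\nabla u$ near $\partial X$, yielding $u \in C^{1,\alpha}(\bar X)$.

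Finally, with $\nabla u \in C^\alpha(\bar X)$ and $f, g \in C^\beta$, the right-hand side $f(x)/g(\nabla u(x))$ is $C^{\min(\alpha,\beta)}$ up to $\partial X$. Combining the interior $C^{2,\alpha}_{\mathrm{loc}}$ regularity of \autoref{thm:intreg} with Caffarelli's boundary Schauder estimate for the \MA equation---which requires exactly the $C^{1,\alpha}$ control from the previous stage in order to flatten $\partial X$ and linearize---upgrades $u$ to $C^{2,\alpha}(\bar X)$ for some $\alpha \in (0,1)$. The main obstacle is the boundary localization of sections in Stage 2; uniform convexity of both $X$ and $Y$ is indispensable there, since without it the sections can degenerate into thin tangential slabs and the oscillation-decay mechanism driving boundary Hölder estimates collapses.
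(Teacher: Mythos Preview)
The paper does not prove this theorem at all. In \S\ref{sec:reg} the author is reviewing known regularity results and explicitly attributes Theorems~\ref{thm:intreg} and~\ref{them:bdyreg} to Caffarelli~\cite{CafBdyReg,CafRegMap,CafBdyReg2}; the statement is quoted without proof as background for the numerical method. There is therefore nothing in the paper to compare your proposal against.

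As an outline of Caffarelli's actual argument your sketch is in the right spirit, but Stage~1 as written has a gap: a line segment $\Sigma\subset\bar X$ has zero Lebesgue measure in $\R^d$ for $d\ge 2$, so the claim that ``the transport map collapses a set of positive Lebesgue measure onto a single point'' does not follow from $\nabla u$ being constant on $\Sigma$. Caffarelli's strict convexity argument proceeds instead through the structure of the contact set $\{u=\ell\}$ and his localization lemma, and uses the mass balance in a more delicate way. The remaining stages (normalized boundary sections controlled via the uniform convexity of both $X$ and $Y$, followed by a Schauder-type bootstrap) are a reasonable high-level summary of the cited work, though the technical content is considerable and not reproduced here or in the paper.
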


\section{Transport boundary conditions}\label{sec:transportbc}
In this section, we discuss the transport boundary conditions in more detail.  We describe a method for solving this challenging problem by solving a sequence of more tractable sub-problems; these are \MA equations subject to Neumann boundary conditions.

\subsection{Nonlinear boundary conditions}\label{sec:nonlinbc}
In the problem of $L^2$ optimal transport between convex sets $X,Y\in\R^d$, the transport condition~\eqref{eq:bc}
\[ \nabla u:X\to Y, \]
also known as the second boundary value problem, can be enforced by simply requiring boundary points to map to boundary points~\cite{Pogorelov,TrudWang2ndBVP,Urbas2ndBVP}:
\[ \nabla u:\partial X \to \partial Y. \]
In particular, if the boundary of the region $Y$ is defined by the function
\[ \Phi(y) = 0, \]
we can write the transport boundary condition as
\bq\label{eq:BC2} \Phi(\nabla u(x)) = 0, \quad x\in\partial X. \eq
While we might try simply enforcing this nonlinear equation at boundary points, the function $\phi$ can be highly nonlinear and non-smooth.  As a result, it will be difficult to construct a discretization that is consistent with a possibly singular solution of the equation and that will permit fast solvers to remain stable.

\subsection{Mapping to rectangles}\label{sec:rect}
The situation simplifies significantly if we are simply mapping a rectangle to a rectangle.  In this case, since the optimal $L^2$ mapping does not permit twisting or rotation, we expect the four sides of the rectangle $X$ to map to the corresponding sides of the rectangle $Y$.  

As a concrete example (see \autoref{fig:squares}), suppose that the sets $X,Y\in\R^2$ are defined as
\[ X = (0,1)\times(0,1),\quad Y = (0,1)\times(0,1). \]
Then, for example, we expect the function $\nabla u(x)$ to map the segment $x_1 = 0,\,x_2\in[0,1]$ to the segment $y_1=0,\,y_2\in[0,1]$.  That is,
\[ u_{x_1}(0,x_2) = 0. \]
Similarly, we will have
\[ u_{x_1}(1,x_2) = 1,\quad u_{x_2}(x_1,0) = 0,\quad u_{x_2}(x_2,1) = 1. \]
This is simply a (linear) Neumann boundary condition, which is straightforward to implement~\cite{BSnum,ObermanSINUM}.

\begin{figure}[htdp]
	\centering
        {\includegraphics[width=\textwidth]{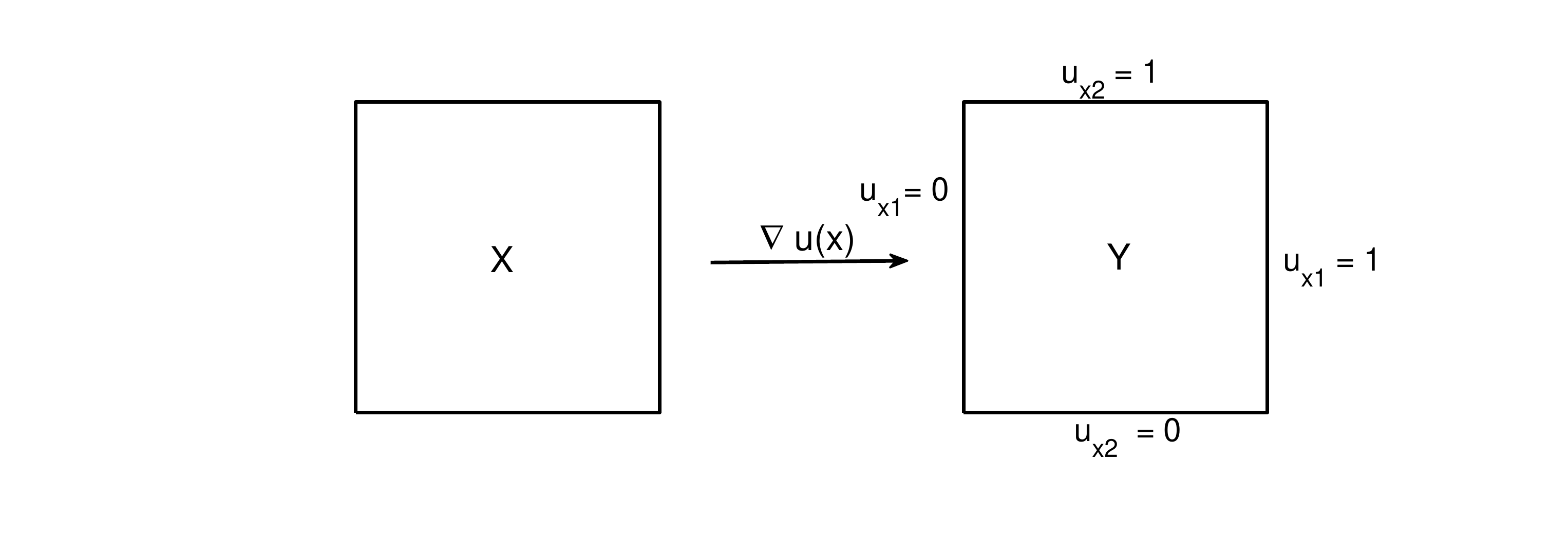}}
  	\vspace*{-40pt}\caption{Mapping between squares.}
  	\label{fig:squares}
\end{figure}

\subsection{A sequence of Neumann boundary conditions}\label{sec:neumann}
Given the appearance of the gradient in the transport boundary condition~\eqref{eq:BC2} and the simplicity of implementing a Neumann boundary condition, we would like to find the Neumann boundary condition 
\[  \frac{\partial u}{\partial \nv} = \phi(x),\quad x\in\partial X \]
for the \MA equation that is equivalent to solving the more challenging problem~\eqref{eq:MA},~\eqref{eq:bc},~\eqref{eq:conv}.   Here the vector $\nv$ refers to the unit outward normal vector at each point $x\in\partial X$.

It is not at all apparent from~\eqref{eq:bc} what the equivalent Neumann boundary condition should be. However, we suggest a sequence of Neumann boundary conditions that can be used to numerically determine the correct function $\phi$.

We first recall that the gradient of the exact solution $u$ maps the boundary of the set $X$ to the boundary of $Y$
\[ \nabla u:\partial X\to \partial Y \]
and that the correct Neumann condition is given by
\[ \phi(x) = \nabla u(x)\cdot \nv(x),\quad x\in\partial X. \]

To find this function, we suppose that we have a convex approximation $\uk$ to the solution of the \MA transport problem.  Then the (sub-)gradient of this function will map the domain $X$ onto some set $\Yk\in\R^d$ and, since $\uk$ is convex,
\[ \nabla\uk:\partial X\to\partial\Yk. \]
In reality, we would like the image of the gradient to be $\partial Y$, the boundary of the target set.  This motivates us to consider the projection of $\partial\Yk = \nabla \uk(\partial X)$ onto the correct set of boundary points $\partial Y$:
\[ \text{Proj}_{\partial Y} (\nabla\uk(x)) = \argmin\limits_{y\in\partial Y}{\|y - \nabla \uk(x)\|_2^2},\quad x\in\partial X. \]

From this we extract a new Neumann boundary condition
\[ \phik(x) = \text{Proj}_{\partial Y} (\nabla\uk(x))\cdot\nv \]
and solve the \MA equation once again with this updated boundary condition to obtain a new approximation $\ukp$.

To summarize, we iterate to produce a sequence of functions $(u^1,u^2,\ldots)$ obtained by solving the \MA equation
\bq\label{eq:it1}
\begin{cases}
\det(D^2\ukp(x)) = f(x)/g(\nabla\ukp(x)), & x\in X\\
\nabla\ukp(x)\cdot\nv(x) = \text{Proj}_{\partial Y} (\nabla\uk(x))\cdot\nv \equiv \phik(x), & x\in\partial X\\
\ukp \text{ is convex.}
\end{cases}
\eq

We make the important observation that these boundary conditions do \emph{not} pin down the values of $\nabla\ukp$ on the boundary.  This would be a mistake since we know only that $\nabla u:\partial X\to\partial Y$ and not the exact values of $\nabla u(x)$ on the boundary.  Instead, each Neumann condition fixes only one component of the gradient (the normal component) and allows the remaining component(s) to slide as needed to ensure that the \MA equation is satisfied.

\subsection{Solvability of sub-problems}\label{sec:solvability}
We note that the iteration~\eqref{eq:it1} may not be well-posed.  The problem here is that, while the \MA equation with the correct Neumann values $\phi(x)$ has a solution, the sub-problems we have described may not be solvable.

We recall that for the \MA equation with a Neumann condition:
\[
\begin{cases}
\det(D^2u) = f(x)/g(\nabla u(x)), & x\in X\\
\nabla u(x)\cdot\nv(x) = \psi(x), & x\in\partial X\\
u \text{ is convex},
\end{cases}
\]
a solution (unique up to an additive constant) exists only if an implicit solvability condition is satisfied~\cite{LionsNeumannMA}.

To get around this problem, we instead solve a problem of the form
\[
\begin{cases}
\det(D^2u) = c f(x)/g(\nabla u(x)), & x\in X\\
\nabla u(x)\cdot\nv(x) = \psi(x), & x\in\partial X\\
u \text{ is convex},\\
\int_X u(x)\,dx = 0
\end{cases}
\]
for the unknowns $c>0$ and $u(x)$, where the constant $c$ is chosen to ensure the equation has a solution and the mean-zero condition forces the solution to be unique (instead of unique up to an additive constant).

Of course, if we are given the correct Neumann values $\phi(x)$ for the solution to the transport problem, the constant $c$ will simply be equal to one.  However, by relaxing this condition we make it possible to solve the sub-problems when the solvability condition requires $c$ to be slightly different than one.

To summarize, we solve the transport problem by performing the iteration
\bq\label{eq:it2}
\begin{cases}
\det(D^2\ukp(x)) = \ckp f(x)/g(\nabla\ukp(x)), & x\in X\\
\nabla\ukp(x)\cdot\nv(x) = \text{Proj}_{\partial Y} (\nabla\uk(x))\cdot\nv(x) \equiv \phik(x), & x\in\partial X\\
\ukp \text{ is convex,}\\
\int_X \ukp(x)\,dx = 0.
\end{cases}
\eq

\begin{remark}
As we pointed out in~\S\ref{sec:reg}, solutions to the optimal transport problem need not be continuously differentiable up to the boundary.  In this case, the Neumann boundary condition should be understood in the viscosity sense~\cite{LionsNeumannHJ}.  To obtain the boundary condition $\phik(x)$ at a point where the gradient $\nabla\uk(x)$ is not defined, we instead look at the projection of a value in the sub-gradient of $\uk(x)$.
\end{remark}

\section{Discretization}\label{sec:discretize}
Having described an iteration for solving the transport problem, we now need to describe the method for solving the sub-problems.  The most challenging step here is to discretize the \MA equation.  In this section, we describe a discretization that provably converges to the viscosity solution.

\subsection{Standard finite difference methods}\label{sec:standardFD}
The simplest thing to do is to simply discretize the \MA equation using standard centered differences.  

In two dimensions, for example, the \MA equation has the form
\[ u_{x_1x_1}u_{x_2x_2}-u_{x_1x_2}^2 = f(x)/g(u_{x_1},u_{x_2}). \]
A standard centered difference discretization of this equation is
\bq\label{eq:discStand} 
MA^h_S[u] = (\Dt_{x_1x_1}u)(\Dt_{x_2x_2}u) - (\Dt_{x_1x_2}u)^2 - f(x)/g(\Dt_{x_1}u,\Dt_{x_2}u)
\eq
where the finite difference operators are defined by
\begin{align*}
[\Dt_{x_1x_1}u]_{ij} &= \frac{1}{h^2} 
\left(
{u_{i+1,j}+u_{i-1,j}-2u_{i,j}}
\right)
\\
[\Dt_{x_2x_2}u]_{ij} &= \frac{1}{h^2}
\left(
u_{i,j+1}+u_{i,j-1}-2u_{i,j}
\right)
\\
[\Dt_{x_1x_2}u]_{ij} &= \frac{1}{4h^2}
\left(
u_{i+1,j+1}+u_{i-1,j-1}-u_{i-1,j+1}-u_{i+1,j-1}
\right)
\\
[\Dt_{x_1}u]_{ij} &= \frac{1}{2h}
\left(
u_{i+1,j}-u_{i-1,j}
\right)\\
[\Dt_{x_2}u]_{ij} &= \frac{1}{2h}
\left(
u_{i,j+1}-u_{i,j-1}
\right).
\end{align*}

However, as is pointed out in~\cite{FONum}, this discretization may fail to converge to the correct viscosity solution and solution methods can become unstable.

\subsection{Convergent finite difference methods}\label{sec:fd}
Before we discuss our discretization of the \MA equation, we briefly review the theory of convergent finite difference methods for viscosity solutions of nonlinear elliptic equations.  The foundation for this is a result by Barles and Souganidis~\cite{BSnum}.

\begin{theorem}[Convergence of Approximation Schemes]
Consider a degenerate elliptic equation, for which there exist unique viscosity solutions.  A consistent, stable approximation scheme converges uniformly on compact subsets to the viscosity solution, provided it is monotone.
\end{theorem}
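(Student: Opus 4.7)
The plan is to follow the standard half-relaxed limits argument introduced by Barles and Souganidis. Let $u^h$ denote the solution of the scheme at grid size $h$. Stability gives a uniform $L^\infty$ bound on $u^h$, so we can define the upper and lower half-relaxed limits
\[
\bar u(x) = \limsup_{y\to x,\, h\to 0} u^h(y),\qquad \underline u(x) = \liminf_{y\to x,\, h\to 0} u^h(y),
\]
which are, respectively, upper and lower semicontinuous and satisfy $\underline u \le \bar u$ by construction. The strategy is to show that $\bar u$ is a viscosity subsolution and $\underline u$ is a viscosity supersolution of the PDE; then the comparison principle (which is what uniqueness of viscosity solutions provides) forces $\bar u \le \underline u$, hence equality. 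The common function $u := \bar u = \underline u$ is then continuous, is the unique viscosity solution, and the equality of the relaxed limits is equivalent to locally uniform convergence of $u^h$ to $u$.

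The core step, and the main obstacle, is verifying the subsolution/supersolution property from the scheme. I would treat $\bar u$; the argument for $\underline u$ is symmetric. Let $\phi \in C^2$ be a test function such that $\bar u - \phi$ has a strict local maximum at some $x_0$, with $(\bar u - \phi)(x_0) = 0$. A standard perturbation argument (replacing $\phi$ by $\phi + \delta|x-x_0|^2$ if necessary) produces sequences $h_n \to 0$ and $x_n \to x_0$ at which $u^{h_n} - \phi$ attains a local maximum, with $u^{h_n}(x_n) \to \bar u(x_0)$. Set $\xi_n = u^{h_n}(x_n) - \phi(x_n)$, so that $u^{h_n} \le \phi + \xi_n$ locally and $\xi_n \to 0$.

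Now monotonicity of the scheme, combined with the fact that the scheme is invariant under (or respects) the addition of constants in the appropriate sense, yields an inequality of the form
\[
S^{h_n}\!\bigl(x_n, u^{h_n}(x_n), u^{h_n}\bigr) \ge S^{h_n}\!\bigl(x_n, \phi(x_n)+\xi_n, \phi+\xi_n\bigr).
\]
Since $u^{h_n}$ solves the scheme, the left-hand side vanishes, giving an inequality on the test function. Passing to the limit $n \to \infty$ and invoking consistency of the scheme at $(x_0, \phi, D\phi, D^2\phi)$ yields the subsolution inequality $F(x_0, \phi(x_0), D\phi(x_0), D^2\phi(x_0)) \le 0$, which is exactly what is required. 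The supersolution property for $\underline u$ is analogous, reversing maxima and minima.

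With both inequalities established, the comparison principle applied to the upper semicontinuous subsolution $\bar u$ and lower semicontinuous supersolution $\underline u$ delivers $\bar u \le \underline u$ on the closure of the domain, completing the proof. The main subtlety is the monotonicity step: one must ensure the scheme's dependence on neighboring values has the correct sign so that pointwise domination of $u^{h_n}$ by $\phi + \xi_n$ translates into the required inequality on $S^{h_n}$, and care is needed near the boundary, where the test-function perturbation and the approximating maximum points must be constructed consistently with any boundary operator.
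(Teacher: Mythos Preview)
Your argument is correct and is precisely the half-relaxed limits proof of Barles and Souganidis~\cite{BSnum}. Note, however, that the paper does not prove this theorem at all: it is quoted as background and attributed directly to~\cite{BSnum}, so there is no ``paper's own proof'' to compare against beyond the citation---your proposal simply reproduces the standard argument from that reference.
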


Oberman~\cite{ObermanSINUM} used this result to further characterize convergent finite difference discretizations.
We recall that a finite difference equation has the form
\[ F^i[u] = F^i(u_i,u_i-u_j|_{j \neq i}). \]
Then a degenerate elliptic (monotone) scheme can be defined as follows:
\begin{definition}\label{def:deg}
The scheme $F$ is \emph{degenerate elliptic} if $F^i$ is non-decreasing in each variable.
\end{definition}
\begin{theorem}[Convergence of Finite Difference Discretizations]
Consider a degenerate elliptic equation, for which there exist unique viscosity solutions.  The solution to a consistent, degenerate elliptic finite difference scheme converges uniformly on compact subsets to the viscosity solution.
\end{theorem}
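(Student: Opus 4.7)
The plan is to reduce this theorem to the Barles--Souganidis theorem stated immediately above. That theorem requires three hypotheses on the scheme: consistency, monotonicity, and stability. Consistency is assumed in the statement, so I need only verify monotonicity and stability for degenerate elliptic finite difference schemes in the sense of Definition~\ref{def:deg}.

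For monotonicity, I would reconcile the sign convention of Definition~\ref{def:deg} with that of Barles and Souganidis. Writing the scheme in the form $F^i[u] = F^i(u_i,\, u_i - u_j|_{j\neq i})$, treating the central value $u_i$ and each difference $u_i - u_j$ as independent arguments, non-decreasingness in each argument in the sense of Definition~\ref{def:deg} translates to: $F^i$ is non-decreasing when $u_i$ is increased with all neighbors fixed, and non-increasing when any neighbor $u_j$ is increased with $u_i$ fixed. This is precisely the monotonicity condition required by Barles and Souganidis (once one accounts for the elliptic sign convention, in which the PDE operator is non-decreasing in the second derivatives and non-decreasing in the unknown).

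For stability, the argument is a discrete comparison principle, which is a standard consequence of monotonicity. If $v$ and $w$ are grid functions with $F^i[v] \leq F^i[w]$ at every node, then inspecting the node where $v - w$ attains its maximum and using that $F^i$ is non-decreasing in $u_i$ and non-increasing in each $u_j$ shows that $v \leq w$ everywhere. Applying this to the scheme solution against affine or quadratic barrier functions compatible with the equation (for \eqref{eq:MA}, an explicit quadratic whose Monge--Amp\`ere value dominates $\sup f / \inf g$) produces bounds that are uniform in the mesh parameter $h$, which is the stability hypothesis needed. Barles--Souganidis then delivers uniform convergence on compact subsets to the viscosity solution.

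The main obstacle is the stability step: degenerate ellipticity alone is a qualitative monotonicity condition and does not by itself yield uniform bounds. One must exploit additional structure of the particular equation, namely the existence of explicit sub- and super-solutions of the scheme (barriers) that are compatible with the boundary data; in the Monge--Amp\`ere setting used later in the paper, appropriate convex quadratic barriers and the maximum principle above supply this.
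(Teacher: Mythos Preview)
The paper does not actually prove this theorem: it is quoted from Oberman~\cite{ObermanSINUM} as background, immediately after the Barles--Souganidis theorem, and is used as a black box in the later Lemmas~\ref{lem:degterms}--\ref{lem:consistency}. So there is no ``paper's own proof'' to compare against; you are supplying an argument the paper deliberately outsources.

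That said, your reduction to Barles--Souganidis is the right strategy and is essentially what~\cite{ObermanSINUM} does. Your identification of monotonicity with degenerate ellipticity in the sense of Definition~\ref{def:deg} is correct. Where your sketch is incomplete is exactly where you flag it: stability. In~\cite{ObermanSINUM} the schemes are additionally assumed to be \emph{proper}, i.e.\ strictly increasing in the reference variable $u_i$ (or one regularises to make them so), and this is what yields existence, uniqueness, and uniform bounds for the discrete solutions via the comparison argument you outline. Degenerate ellipticity alone, as you note, is not enough; the theorem as stated in the paper is a slightly informal summary that suppresses this hypothesis. Your barrier argument for the Monge--Amp\`ere case is reasonable in spirit, but for the general statement you would need to invoke properness rather than equation-specific barriers.
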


Even for linear elliptic equations, it is not always possible to construct monotone schemes using a narrow stencil, as was demonstrated in the early work by Motzkin and Wasow~\cite{MotzkinWasow}.  Oberman~\cite{ObermanEigenvalues} used wide stencils to construct monotone discretizations of second directional derivatives for directions $\nu$ lying on the grid.  These derivatives can be discretized using centered differences:
\bq\label{eq:ws} 
\Dt_{\nu\nu}u_i = \frac{1}{\abs{\nu}^2h^2}\left(u(x_i + \nu h) + u(x_i - \nu h) - 2u(x_i)\right). 
\eq
Depending on the direction $\nu$, this may involve a wide stencil.  At points near the boundary of the domain, some values required by the wide stencil will not be available~(\autoref{fig:stencil}).  In these cases, we use interpolation at the boundary to construct a (lower accuracy) stencil for the second directional derivative; see~\cite{ObermanEigenvalues} for more details.  The consistency error of these approximations depends on both the spatial resolution $h$ and angular resolution $d\theta$ of the stencil.

\begin{figure}
  \centering
  \subfigure[In the interior.]{
  \includegraphics[height=0.3\textwidth]
  {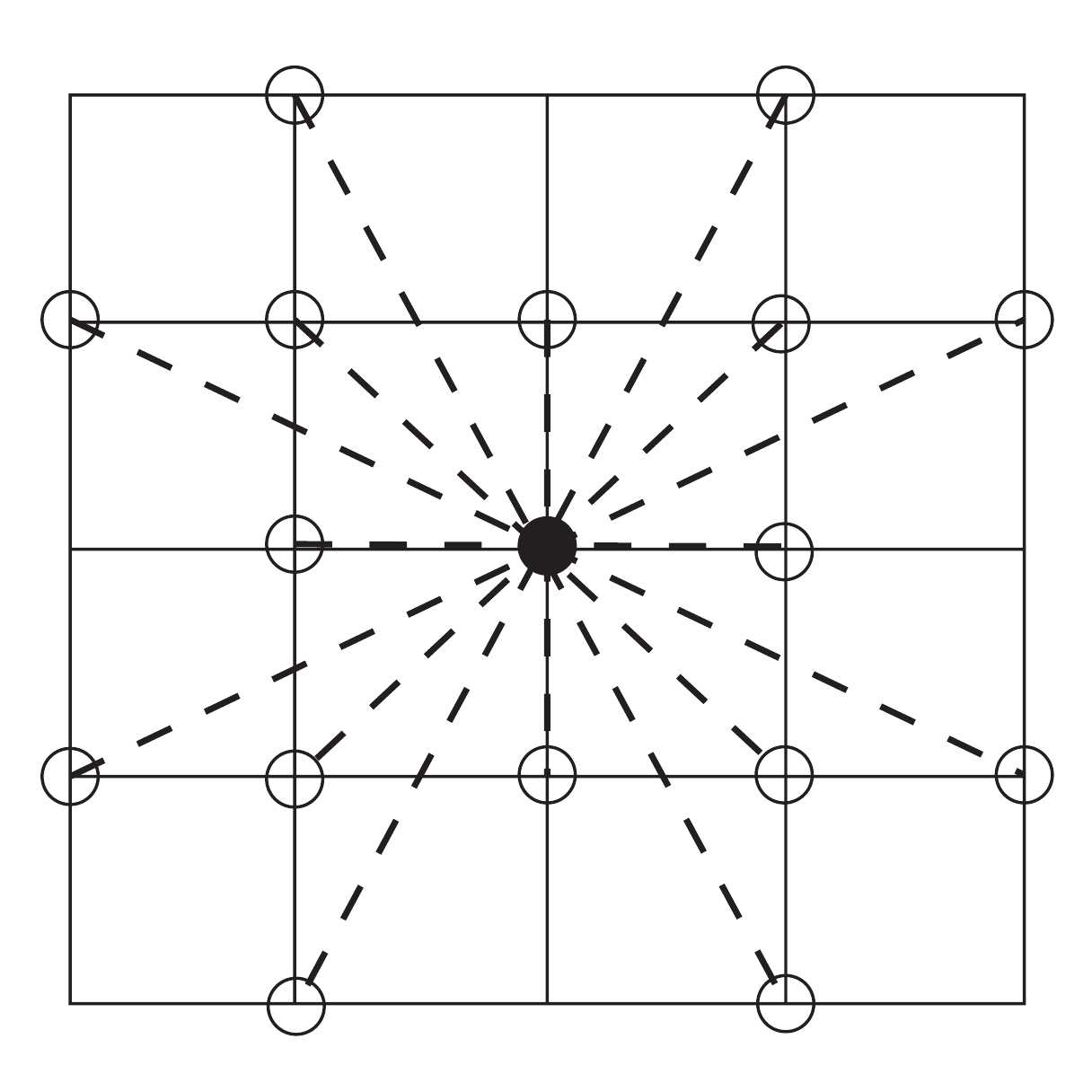}
  }                
  \subfigure[Near the boundary.]{
  \includegraphics[height=0.3\textwidth]
  {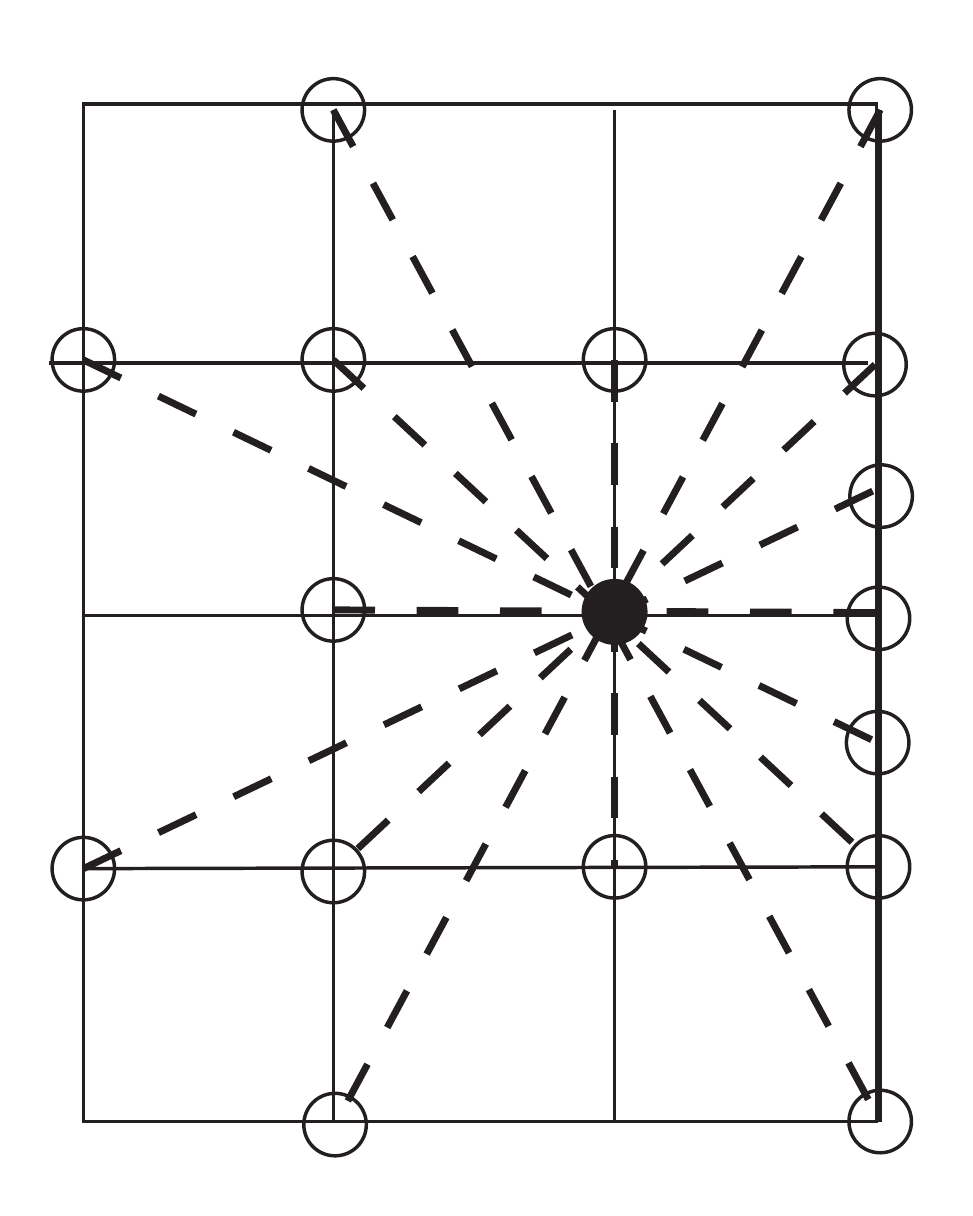}
  }
\vspace*{-12pt}\caption{
Wide stencils on a two dimensional grid.
}
\label{fig:stencil}
\end{figure}

\subsection{Discretization of the \MA operator}\label{sec:discMAop}
Next we describe a monotone discretization of the \MA operator
\[ \det(D^2u), \]
which was introduced in~\cite{FOTheory}.  This discretization is a consequence of the following variational characterization of the \MA operator
\[ \det(D^2u) = \min\limits_{(\nu_1,\ldots,\nu_d)\in V} \prod\limits_{j=1}^d \max\{u_{\nu_j\nu_j},0\} \]
where $u$ is a convex function and $V$ is the set of all orthonormal bases of $\R^d$:
\[ V = \{(\nu_1,\ldots,\nu_d)  \mid \nu_j\in\R^d,\nu_i\perp\nu_j \text{ if }i\neq j, \|\nu_j\|_2 = 1\}. \]

We are going to add an additional term to this expression in order to further penalize non-convexity:
\[ \text{det}^+(D^2u) =  \min\limits_{(\nu_1,\ldots,\nu_d)\in V} \left\{\prod\limits_{j=1}^d \max\{u_{\nu_j\nu_j},0\} + \sum\limits_{j=1}^d \min\{u_{\nu_j\nu_j},0\}\right\}.\]
We note that if $u$ is a convex function, the terms in the summation will vanish since all second directional derivatives $u_{\nu_j\nu_j}$ are non-negative.  On the other hand, if $u$ is a non-convex function then at least one of the directional derivatives $u_{\nu\nu}$ will be negative and we will have
\[  \text{det}^+(D^2u) \leq  u_{\nu\nu} < 0.\]
Thus this non-convex function cannot be a solution of the \MA equation
\bq\label{eq:MAconv} \text{det}^+(D^2u) = F(x,\nabla u)\geq0. \eq

In order to discretize this, we restrict our attention to a finite set of orthogonal vectors $\G$ on the grid.  A monotone discretization of the \MA operator is given by
\[
\min\limits_{\{\nu_1\ldots\nu_d\}\in \G}
\left\{\prod\limits_{j=1}^{d} \max\{\Dt_{\nu_j\nu_j}u,0\}  + \sum\limits_{j=1}^d \min\{\Dt_{\nu_j\nu_j}u,0\}\right\}.
\]
where $\Dt_{\nu\nu}u$ is the discretization of the second directional derivative that is defined in~\eqref{eq:ws}.

\subsection{Discretization of functions of the gradient}\label{sec:discGrad}
In the works~\cite{FOTheory,FONum}, we considered problems where the right-hand side $f$ was a function of $x$ only.  However, in the \MA equation that arises in the transport problem, the right-hand side also depends on the gradient of the solution.  That is, the equation~\eqref{eq:MA} is of the form
\bq\label{eq:gMA} \det(D^2u(x)) = F(x,\nabla u(x)). \eq
Consequently, we need to discretize not only the eigenvalues of the Hessian but also the gradient.

The simplest approach would be to simply use standard centered differences for the first derivatives:
\[ \Dt_{x_j}u(\xv) = \frac{1}{2h}(u(\xv+h\ev_j) - u(\xv-h\ev_j)) \]
where $\ev_j$ is the vector whose $i^{th}$ component is equal to the Kronecker delta $\delta_{ij}$.  While this discretization is consistent with $C^2$ solutions of the \MA equation, it is not monotone and there is no guarantee that it will converge to the viscosity solution.

Oberman~\cite{ObermanSINUM} provided some examples illustrating the construction of monotone discretizations for functions of the gradient.  For example, that work describes a monotone discretization of the absolute value of a first derivative:
\[ \abs{u_x(x_j)} = \frac{1}{h}\max\{u(x_j)-u(x_{j-1}),u(x_{j+1})-u(x_j),0\} + \bO(h). \]

For more general functions of the gradient, one approach to producing a monotone discretization is to simply use centered differences and add on a small multiple of the laplacian:
\[ g(u_x) = g(\Dt_xu) + hK_g\Dt_{xx}u + \bO(h).\]
Here $K_g$ is the Lipschitz constant of the function $g$.

However, instead of adding an additional term to the discretized equation, we could make use of the second derivatives that are already present in the \MA equation.  This is the subject of the following section.

\subsection{Discretization of the \MA equation}\label{sec:discMA}
So far we have attempted to produce a monotone discretization for each individual term in the \MA equation.  
%A consequence of this is reduced accuracy since a monotone discretization of a function of the gradient will not be more than first-order ($\bO(h)$) accurate.  
As an alternative to this, we suggest using a wide stencil to produce a discretization of the \MA equation which, though it may not be monotone for each of the individual terms, is monotone when considered as a whole.  This discretization also ensures that the linear systems that must be solved in the implementation of Newton's method are well-conditioned even when the eigenvalues of the Hessian are close to zero or the iteration is initialized poorly.

To accomplish this, we make use of the second directional derivatives $u_{\nu_j\nu_j}$ that are already present in the \MA equation, as noted in~\S\ref{sec:discGrad}.  By making a change of coordinates, we can write the gradient
\[ \nabla u =  \left(u_{x_1},\ldots,u_{x_d}\right)\]
in terms of first derivatives in the directions $\nu_j$:
\[ \tilde\nabla u = \left(u_{\nu_1},\ldots,u_{\nu_d}\right). \]
Once this is done, the only problem we might have will be if one of the second derivatives vanishes, in which case the \MA operator will not be uniformly elliptic.  We can remedy this by simply regularizing the maximum and minimum functions slightly to bound them away from zero:
\[ \max\{\cdot,0\},\min\{\cdot,0\} \to  \max\{\cdot,\delta\},\min\{\cdot,\delta\}\]
where $\delta>0$ is a small parameter.

To accomplish all this, we first need to rewrite the gradient in terms of the new coordinate system.  We consider any set of $d$ orthogonal vectors in $\R^d$: $(v_1,\ldots,v_d)$.  Now we can rewrite the gradient of a function $u$ in terms of directional derivatives along these axes:
\[ \nabla u = \left(u_{x_1},\ldots,u_{x_d}\right) = \left(\sum\limits_{j=1}^d\frac{v_j\cdot \ev_1}{\abs{v_j}}u_{v_j},\ldots,\sum\limits_{j=1}^d\frac{v_j\cdot \ev_d}{\abs{v_j}}u_{v_j}\right). \]
This enables us to discretize the gradient using a wide stencil by discretizing the directional derivative in the direction $v_j$ as
\bq\label{eq:discGrad}
\Dt_{v_j}u_i = \frac{1}{2\abs{v_j}h}\left(u(x_i + v_j h) -u(x_i - v_j h)\right), 
\eq
which has an accuracy of $\bO(h^2)$.  Near the boundary, where some of the required values may not be available, we can simply use a first-order accurate forward or backward difference.  We stress again that this discretization of the gradient is valid for \emph{any} set of orthogonal vectors $v_1,\ldots,v_d$.

Using this characterization of the gradient,  we can rewrite the \MA equation as
\begin{align*}
MA[u]&= 
	\min\limits_{(\nu_1,\ldots,\nu_d)\in V}\left\{ \prod\limits_{j=1}^d \max\{u_{\nu_j\nu_j},0\} + \sum\limits_{j=1}^d\min\left\{u_{\nu_j\nu_j},0\right\}\right\}-F(x,\nabla u)\\
	&=\min\limits_{(\nu_1,\ldots,\nu_d)\in V}\left\{ \prod\limits_{j=1}^d \max\{u_{\nu_j\nu_j},0\} + \sum\limits_{j=1}^d\min\left\{u_{\nu_j\nu_j},0\right\} - F(x,\nabla u)\right\}\\
	&\phantom{=}\min\limits_{(\nu_1,\ldots,\nu_d)\in V}\left\{ \prod\limits_{j=1}^d \max\{u_{\nu_j\nu_j},0\} + \sum\limits_{j=1}^d\min\left\{u_{\nu_j\nu_j},0\right\} \right.\\
	&\phantom{=\min\limits_{(\nu_1,\ldots,\nu_d)\in V}}- \left.F\left(x,\sum\limits_{j=1}^d\frac{\nu_j\cdot \ev_1}{\abs{\nu_j}}u_{\nu_j},\ldots,\sum\limits_{j=1}^d\frac{\nu_j\cdot \ev_d}{\abs{\nu_j}}u_{\nu_j}\right)\right\}\\
  &= \min\limits_{(\nu_1,\ldots,\nu_d)\in V} G_{(\nu_1,\ldots,\nu_d)}.
\end{align*}

As we have already described in~\eqref{eq:ws},\eqref{eq:discGrad}, the directional first and second derivatives can be discretized using a wide stencil by limiting the set of possible directions in the set $V$ to a finite set $\G$ of orthogonal vectors that lie on the grid.  We also introduce a small parameter $\delta>0$ in order to bound the maximum and minimum functions away from zero:
\[ \max\{\cdot,0\},\min\{\cdot,0\} \to  \max\{\cdot,\delta\},\min\{\cdot,\delta\}.\]

We can now define the discretization of the \MA equation as
\bq\label{eq:MAmonDisc}
MA_M^{h,d\theta,\delta}[u] 
  = \min\limits_{(\nu_1,\ldots,\nu_d)\in \G}G_{(\nu_1,\ldots,\nu_d)}^{h,d\theta,\delta}[u]
\eq
where each of the $G_{(\nu_1,\ldots,\nu_d)}^{h,d\theta,\delta}[u]$ is defined as
\bq\label{eq:monterm}
\begin{split}
G_{(\nu_1,\ldots,\nu_d)}^{h,d\theta,\delta}[u] = &\prod\limits_{j=1}^d\max\{\Dt_{\nu_j\nu_j}u,\delta\} + 
\sum\limits_{j=1}^d\min\{\Dt_{\nu_j\nu_j}u,\delta\} - \\
&F\left(x,\sum\limits_{j=1}^d\frac{\nu_j\cdot \ev_1}{\abs{\nu_j}}\Dt_{\nu_j}u,\ldots,\sum\limits_{j=1}^d\frac{\nu_j\cdot \ev_d}{\abs{\nu_j}}\Dt_{\nu_j}u\right).
\end{split}
\eq

\begin{theorem}[Convergence to Viscosity Solution]\label{thm:conv}
Let the PDE~\eqref{eq:gMA} have a unique viscosity solution and let the right-hand side $F(x,\nabla u)$ be Lipschitz continuous on $\overline{\Omega}\times\R^d$ with Lipschitz constant $K_F$.
Then the solutions of the scheme~\eqref{eq:MAmonDisc} converges to the viscosity solution of~\eqref{eq:MA} as $h,d\theta,\delta\to0$ with $\delta^{d-1}>K_F\abs{\nu_j}h/2$ for every $\nu_j\in\G$.
\end{theorem}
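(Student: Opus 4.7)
The plan is to apply the Barles-Souganidis framework recalled immediately above: verify that $MA_M^{h,d\theta,\delta}$ is (i) monotone (degenerate elliptic in the sense of Definition \ref{def:deg}), (ii) consistent with the \MA equation, and (iii) stable, and then invoke the convergence theorem for degenerate elliptic finite difference schemes. Since the pointwise minimum of degenerate elliptic schemes is again degenerate elliptic, monotonicity for $MA_M^{h,d\theta,\delta}$ reduces to monotonicity of each $G_{(\nu_1,\ldots,\nu_d)}^{h,d\theta,\delta}$ separately.

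The main obstacle, and the step in which the hypothesis $\delta^{d-1} > K_F\abs{\nu_j}h/2$ enters in an essential way, is monotonicity. Fix a basis $(\nu_1,\ldots,\nu_d) \in \G$, a grid point $x_i$, and a neighboring value $u_+ = u(x_i + \nu_j h)$. Rewriting $\Dt_{\nu_j\nu_j}u$ as a non-negative combination of the differences $u_i - u_+$ and $u_i - u_-$ shows that a perturbation of $u_+$ produces $\partial \Dt_{\nu_j\nu_j}u/\partial u_+ = 1/(\abs{\nu_j}^2 h^2)$, while the (non-monotone) centered gradient gives $\partial \Dt_{\nu_j}u/\partial u_+ = 1/(2\abs{\nu_j} h)$. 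Splitting into cases according to which branch of the regularized $\max$ and $\min$ is active, the sensitivity of the second-order piece of $G$ to $u_+$ is bounded below in magnitude by $\delta^{d-1}/(\abs{\nu_j}^2 h^2)$: either $\Dt_{\nu_j\nu_j}u > \delta$, in which case the product contains $d-1$ factors each at least $\delta$, or $\Dt_{\nu_j\nu_j}u \leq \delta$, in which case the $\min$ term contributes $1/(\abs{\nu_j}^2 h^2)$ (assuming the natural regime $\delta \leq 1$). The contribution from $-F(x,\tilde\nabla u)$ is controlled by $K_F/(2\abs{\nu_j} h)$ via the Lipschitz hypothesis. The stated inequality forces the Hessian contribution to dominate so that the scheme has the correct monotone sign at every neighbor in every active stencil direction. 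The role of the clamping by $\delta$ is crucial: without it the derivative of the product could vanish wherever some $\Dt_{\nu_k\nu_k}u$ is small, and the non-monotone gradient discretization would then destroy ellipticity.

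Consistency follows from routine Taylor expansions: for smooth $\phi$ the centered differences satisfy $\Dt_{\nu_j\nu_j}\phi - \phi_{\nu_j\nu_j} = \bO(h^2)$ and $\Dt_{\nu_j}\phi - \phi_{\nu_j} = \bO(h^2)$; the regularization error obeys $\abs{\max\{a,\delta\}-\max\{a,0\}} \leq \delta$ and analogously for $\min$; and the restriction of orthonormal bases from $V$ to the discrete set $\G$ contributes an error of order $d\theta$ via the variational characterization of the \MA operator used in \S\ref{sec:discMAop}. For convex test functions these combine to recover $\det(D^2\phi)(x_0) - F(x_0,\nabla\phi(x_0))$ in the limit; for non-convex test functions the $\sum \min\{\cdot,\delta\}$ penalty drives the scheme strictly negative, consistent with the convexity requirement built into the viscosity definition. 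Stability (a uniform $L^\infty$ bound on the discrete solutions) then follows from a discrete comparison principle, which is the standard consequence of the strict monotonicity established above; explicit quadratic sub- and super-solutions of the form $\pm A\abs{x}^2/2$ with $A$ depending on $\|F\|_\infty$ and the diameter of $X$ serve as barriers. Applying the Barles-Souganidis theorem to the resulting monotone, consistent, stable scheme delivers uniform convergence on compact subsets of $X$ to the unique viscosity solution of \eqref{eq:MA}.
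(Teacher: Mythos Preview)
Your proposal is correct and follows essentially the same route as the paper: reduce monotonicity of $MA_M^{h,d\theta,\delta}$ to monotonicity of each $G_{(\nu_1,\ldots,\nu_d)}^{h,d\theta,\delta}$, then show that the $\delta$-regularized second-order part dominates the Lipschitz contribution from $F$ precisely when $\delta^{d-1}>K_F\abs{\nu_j}h/2$, and combine with consistency. The only notable difference is that the paper does not argue stability separately via barriers; it invokes Oberman's theorem (stated just before the discretization) in which degenerate ellipticity together with consistency already yields convergence, so your stability step is extra work rather than a gap. The paper's monotonicity computation also avoids your case split by using the elementary identity $\max\{a,\delta\}+\min\{a,\delta\}=a+\delta$ on finite increments, which cleanly handles the non-differentiability at the switching point; your derivative-based case analysis reaches the same bound but is slightly less tidy at the interface $\Dt_{\nu_j\nu_j}u=\delta$.
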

\begin{proof}
The convergence follows from verifying consistency and degenerate ellipticity.   This is accomplished in Lemmas~\ref{lem:deg}-\ref{lem:consistency}.
\end{proof}

\begin{lemma}\label{lem:degterms}
Under the hypotheses of \autoref{thm:conv}, the scheme for $G_{(\nu_1,\ldots,\nu_d)}^{h,d\theta,\delta}[u]$ in~\eqref{eq:monterm} is degenerate elliptic.
\end{lemma}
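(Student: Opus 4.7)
My plan is to check degenerate ellipticity of $G_{(\nu_1,\ldots,\nu_d)}^{h,d\theta,\delta}[u]$ directly by estimating how each of the three additive pieces of~\eqref{eq:monterm}---the product $\prod_j\max\{\Dt_{\nu_j\nu_j}u,\delta\}$, the sum $\sum_j\min\{\Dt_{\nu_j\nu_j}u,\delta\}$, and the right-hand side $-F(x,\nabla u)$---depends on each stencil value $u(x_i)$ and $u(x_i\pm h\nu_j)$.  The quantitative heart of the argument is that the second-derivative pieces produce a monotone contribution of size $\delta^{d-1}/(\abs{\nu_j}^2h^2)$ per neighbor that dominates a worst-case deviation of size $K_F/(2\abs{\nu_j}h)$ coming from the first-derivative piece precisely under the hypothesis $\delta^{d-1}>K_F\abs{\nu_j}h/2$ of \autoref{thm:conv}.

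For the product and sum, each $\Dt_{\nu_j\nu_j}u$ is the standard centered second difference, which depends monotonically on its three stencil points in the usual way.  Since $\max\{\cdot,\delta\}$ and $\min\{\cdot,\delta\}$ are non-decreasing, these compositions inherit the monotonicity of $\Dt_{\nu_j\nu_j}u$ in the stencil values.  The truncation at $\delta>0$ keeps every factor of the product positive, so $\prod_j\max\{\Dt_{\nu_j\nu_j}u,\delta\}$ inherits this monotonicity too, and an $\epsilon$-perturbation of a neighbor $u(x_i\pm h\nu_j)$ changes the product by at least $\delta^{d-1}\epsilon/(\abs{\nu_j}^2h^2)$ whenever the $j$-th $\max$ is active, because each of the remaining $d-1$ factors is bounded below by $\delta$.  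On the complementary region where that $\max$ is inactive, the matching term in $\sum_j\min\{\Dt_{\nu_j\nu_j}u,\delta\}$ contributes the full $\epsilon/(\abs{\nu_j}^2h^2)$ instead, an even stronger bound when $\delta<1$.

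The delicate piece is $-F(x,\nabla u)$.  The discretization $\Dt_{\nu_j}u=(u^+-u^-)/(2\abs{\nu_j}h)$ is antisymmetric in $u^+,u^-$ and so not monotone in the neighbors; $F$ composed with it can move either way.  Passing through the change of coordinates $\nabla u=\bigl(\sum_j(\nu_j\cdot\ev_k)/\abs{\nu_j}\cdot\Dt_{\nu_j}u\bigr)_k$ and using the identity $\sum_k(\nu_j\cdot\ev_k)^2=\abs{\nu_j}^2$, an $\epsilon$-perturbation of $u(x_i\pm h\nu_j)$ changes $\nabla u$ by exactly $\epsilon/(2\abs{\nu_j}h)$ in Euclidean norm.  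The Lipschitz hypothesis then bounds the corresponding change in $F(x,\nabla u)$ by $K_F\epsilon/(2\abs{\nu_j}h)$.  Crucially, $\Dt_{\nu_j}u$ does not involve $u(x_i)$, so the $-F$ term contributes nothing to the dependence on the center value.

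Combining these estimates, the net change in $G_{(\nu_1,\ldots,\nu_d)}^{h,d\theta,\delta}[u]$ under an $\epsilon$-perturbation of any neighbor has the correct monotonic sign precisely when
\[ \frac{\delta^{d-1}}{\abs{\nu_j}^2h^2}>\frac{K_F}{2\abs{\nu_j}h}, \]
which is exactly the stated hypothesis.  Dependence on the center value $u(x_i)$ is handled unconditionally, since $-F$ drops out and only the second-derivative terms act there.  The main technical obstacle I foresee is the non-smoothness of $\max$ and $\min$; rather than manipulate formal subgradients, I will perform the analysis above as a direct comparison of $G_{(\nu_1,\ldots,\nu_d)}^{h,d\theta,\delta}[u]$ with $G_{(\nu_1,\ldots,\nu_d)}^{h,d\theta,\delta}[u']$ for configurations $u,u'$ differing in a single stencil value, reducing the whole argument to the elementary monotonicity of $\max$ and $\min$ as real functions.
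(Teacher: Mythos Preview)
Your approach is essentially the paper's: express the scheme through the differences $p_j^\pm=u(x_i\pm h\nu_j)-u(x_i)$ and verify monotonicity in each by balancing the $\delta^{d-1}/(\abs{\nu_j}^2h^2)$ gain from the second-difference terms against the $K_F/(2\abs{\nu_j}h)$ Lipschitz loss from $F$.  Your treatment of the $-F$ term (change of basis, $\sum_k(\nu_j\cdot\ev_k)^2=\abs{\nu_j}^2$, Lipschitz bound) and of the center value is correct.

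There is one small gap.  Your two-case split---``$j$-th $\max$ active'' versus ``$j$-th $\max$ inactive''---does not cover perturbations that push $\Dt_{\nu_j\nu_j}u$ across the threshold~$\delta$; in that transition neither the product alone nor the sum alone absorbs the full increment $\epsilon/(\abs{\nu_j}^2h^2)$, so your quantitative lower bound is not yet justified there.  The paper avoids any case analysis by observing that $\max\{t,\delta\}+\min\{t,\delta\}=t+\delta$ identically, so the \emph{combined} increment $\Delta\max+\Delta\min$ always equals $\epsilon/(\abs{\nu_j}^2h^2)$; after weakening $\Delta\min$ to $\delta^{d-1}\Delta\min$ (legitimate since $\Delta\min\geq0$ and $\delta<1$) and using $\prod_{k\neq j}\max\{\cdot,\delta\}\geq\delta^{d-1}$ on $\Delta\max$, one gets the uniform lower bound $\delta^{d-1}\epsilon/(\abs{\nu_j}^2h^2)$ in one line.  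Your argument is easily completed along these lines.
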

\begin{proof}
We introduce the notation
\[ p_j^+(x_i) = u(x_i+h\nu_j)-u(x_i),\quad p_j^-(x_i) = u(x_i-h\nu_j)-u(x_i). \]
This allows us to write $G_{(\nu_1,\ldots,\nu_d)}^{h,d\theta,\delta}[u]$ in the form of Definition~\ref{def:deg} as follows:
\begin{multline} G_{(\nu_1,\ldots,\nu_d)}^{h,d\theta,\delta}(p_1^+,p_1^-,\ldots,p_d^+,p_d^-) = 
   \prod\limits_{j=1}^d \max\left\{\frac{p_j^++p_j^-}{\abs{\nu_j}^2h^2},\delta\right\} \\+
   \sum\limits_{j=1}^d\min\left\{\frac{p_j^++p_j^-}{\abs{\nu_j}^2h^2},\delta\right\}- 
   F\left(\frac{p_1^+-p_1^-}{2\abs{\nu_1}h},\ldots,\frac{p_d^+-p_d^-}{2\abs{\nu_d}h}\right). \end{multline}
Now we need only check that this is non-decreasing in each of its arguments.  We verify this for the term $p_1^+$; the reasoning is identical for the remaining terms.

Choose any $\epsilon>0$ and consider:
\begin{align*}
G_{(\nu_1,\ldots,\nu_d)}^{h,d\theta,\delta}&(p_1^++\epsilon)-G_{(\nu_1,\ldots,\nu_d)}^{h,d\theta,\delta}(p_1^+) \\
  &\geq \delta^{d-1}\left(\max\left\{\frac{p_1^++\epsilon+p_1^-}{\abs{\nu_1}^2h^2},\delta\right\}-\max\left\{\frac{p_1^++p_1^-}{\abs{\nu_1}^2h^2},\delta\right\}\right)\\
  &\phantom{\geq} + \delta^{d-1}\left(\min\left\{\frac{p_1^++\epsilon+p_1^-}{\abs{\nu_1}^2h^2},\delta\right\}-\min\left\{\frac{p_1^++p_1^-}{\abs{\nu_1}^2h^2},\delta\right\}\right)\\
  &\phantom{\geq}-K_F\left(\frac{p_1^++\epsilon-p_1^-}{2\abs{\nu_1}h}-\frac{p_1^+-p_1^-}{2\abs{\nu_1}h}\right).
\end{align*}
In the above, we have used the facts that
\[ \min\left\{\frac{p_1^++\epsilon+p_1^-}{\abs{\nu_1}^2h^2},\delta\right\}-\min\left\{\frac{p_1^++p_1^-}{\abs{\nu_1}^2h^2},\delta\right\} \geq 0  \]
and that $\delta<1$.

We continue with this expression to conclude that
\begin{align*}
G_{(\nu_1,\ldots,\nu_d)}^{h,d\theta,\delta}&(p_1^++\epsilon)-G_{(\nu_1,\ldots,\nu_d)}^{h,d\theta,\delta}(p_1^+) \\
  &\geq \delta^{d-1}\left(\frac{p_1^++\epsilon+p_1^-}{\abs{\nu_1}^2h^2} + \delta - \frac{p_1^++p_1^-}{\abs{\nu_1}^2h^2}-\delta\right)-K_F\frac{\epsilon}{2\abs{\nu_1}h}\\
  &= \frac{\epsilon}{\abs{\nu_1}^2h^2}(\delta^{d-1}-K_F\abs{\nu_1}h/2).
\end{align*}
This expression is positive as long as $\delta^{d-1}>K_F\abs{\nu_1}h/2$.

We conclude that each of the $G_{(\nu_1,\ldots,\nu_d)}^{h,d\theta,\delta}$ is increasing in each of its arguments and is thus degenerate elliptic.
\end{proof}

\begin{lemma}\label{lem:deg}
Under the hypotheses of Theorem~\ref{thm:conv}, the scheme for $MA^{h,d\theta,\delta}_M[u]$ in~\eqref{eq:MAmonDisc} is degenerate elliptic.
\end{lemma}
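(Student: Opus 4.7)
The plan is to realize Lemma~\ref{lem:deg} as an immediate corollary of Lemma~\ref{lem:degterms} together with the elementary fact that a pointwise minimum of a finite family of non-decreasing functions is itself non-decreasing. Since $MA_M^{h,d\theta,\delta}[u]$ is, by definition \eqref{eq:MAmonDisc}, the minimum over the finite set $\G$ of the building blocks $G_{(\nu_1,\ldots,\nu_d)}^{h,d\theta,\delta}[u]$, there is essentially nothing analytic left to do once Lemma~\ref{lem:degterms} is in hand.

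First I would write the scheme at a grid point $x_i$ in the canonical form $F^i[u]=F^i(u_i,u_i-u_j\mid j\neq i)$ required by Definition~\ref{def:deg}. The collection of arguments is the union, over all frames $(\nu_1,\ldots,\nu_d)\in\G$, of the differences $p_j^\pm(x_i)=u(x_i\pm h\nu_j)-u(x_i)$ that appear in Lemma~\ref{lem:degterms}; any argument that does not enter a particular $G_{(\nu_1,\ldots,\nu_d)}^{h,d\theta,\delta}$ contributes trivially (the function is constant, hence vacuously non-decreasing, in that variable). By Lemma~\ref{lem:degterms}, each $G_{(\nu_1,\ldots,\nu_d)}^{h,d\theta,\delta}$ is then non-decreasing in every one of these arguments, provided the smallness condition $\delta^{d-1}>K_F\abs{\nu_j}h/2$ of Theorem~\ref{thm:conv} holds for every $\nu_j\in\G$.

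Next I would invoke the one-line lemma that if $f_1,\ldots,f_N$ are non-decreasing in a variable $p$, then so is $\min\{f_1,\ldots,f_N\}$: increasing $p$ weakly increases every $f_k(p)$, hence weakly increases their minimum. Applying this coordinatewise across the finite set $\G$ yields monotonicity of $MA_M^{h,d\theta,\delta}[u]$ in each of its difference arguments, which is precisely the definition of a degenerate elliptic scheme. The only conceivable obstacle is bookkeeping of the argument list across different frames, but this is harmless since functions independent of a given argument are automatically monotone in it; no new analytic input beyond Lemma~\ref{lem:degterms} is required.
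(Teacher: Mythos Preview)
Your proposal is correct and is exactly the approach the paper takes: the paper's own proof is the single sentence ``This scheme is the minimum of degenerate elliptic schemes, and is therefore degenerate elliptic,'' which is precisely the combination of Lemma~\ref{lem:degterms} with the closure of monotonicity under finite minima that you spell out. Your additional bookkeeping about the union of argument lists across frames is a reasonable clarification but adds nothing beyond the paper's one-line argument.
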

\begin{proof}
This scheme is the minimum of degenerate elliptic schemes, and is therefore degenerate elliptic.
\end{proof}

\begin{lemma}\label{lem:consistency}
The scheme for $MA^{h,d\theta,\delta}_M[u]$ in~\eqref{eq:MAmonDisc} is consistent with the \MA equation~\eqref{eq:MAconv} for any function $u\in C^2(X)$.
\end{lemma}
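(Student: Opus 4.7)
The plan is to verify pointwise consistency in the Barles--Souganidis sense: for any fixed $u \in C^2(X)$ and any point $x_0 \in X$, I would show that $MA^{h,d\theta,\delta}_M[u](x_i) \to \text{det}^+(D^2u(x_0)) - F(x_0, \nabla u(x_0))$ as $h, d\theta, \delta \to 0$ and $x_i \to x_0$. The argument splits naturally into a pointwise estimate on each $G^{h,d\theta,\delta}_{(\nu_1,\ldots,\nu_d)}[u]$ followed by passage to the minimum over $\G$.

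First I would estimate each term $G^{h,d\theta,\delta}_{(\nu_1,\ldots,\nu_d)}[u]$ for a fixed orthogonal basis in $\G$. A Taylor expansion of~\eqref{eq:ws} yields $\Dt_{\nu_j\nu_j} u(x_i) = u_{\nu_j\nu_j}(x_i) + \bO(|\nu_j|^2 h^2)$, and~\eqref{eq:discGrad} gives $\Dt_{\nu_j} u(x_i) = u_{\nu_j}(x_i) + \bO(|\nu_j|^2 h^2)$. Substituting into the change-of-coordinates formula
\[
\nabla u = \left(\sum_{j=1}^d \frac{\nu_j\cdot\ev_1}{|\nu_j|}u_{\nu_j},\ldots,\sum_{j=1}^d \frac{\nu_j\cdot\ev_d}{|\nu_j|}u_{\nu_j}\right),
\]
and using continuity of $F$ together with $\max\{\cdot,\delta\}\to\max\{\cdot,0\}$ and $\min\{\cdot,\delta\}\to\min\{\cdot,0\}$ uniformly on compacts, I obtain
\[
G^{h,d\theta,\delta}_{(\nu_1,\ldots,\nu_d)}[u](x_i) \longrightarrow \prod_{j=1}^d \max\{u_{\nu_j\nu_j}(x_0),0\} + \sum_{j=1}^d \min\{u_{\nu_j\nu_j}(x_0),0\} - F(x_0,\nabla u(x_0)).
\]

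Next I would pass to the minimum. For the upper bound, take any minimizer $(\nu_1^*,\ldots,\nu_d^*)\in V$ of the continuous operator and pick an approximating orthogonal system $(\nu_1^{d\theta},\ldots,\nu_d^{d\theta})\in\G$ whose normalized directions lie within $\bO(d\theta)$ of the $\nu_j^*$; such a choice exists for $d\theta$ small enough by the construction of the wide stencil. Since $u \in C^2$, the maps $\nu\mapsto u_{\nu\nu}(x_0)$ and $\nu\mapsto u_\nu(x_0)$ are continuous on the compact set of unit vectors, which gives the limsup bound. For the liminf bound, every $(\nu_1,\ldots,\nu_d)\in\G$ is itself an element of $V$, so the pointwise convergence of each $G^{h,d\theta,\delta}$ above, combined with uniform continuity of the continuous functional on the compact set of orthonormal bases, shows that the discrete minimum cannot drop below the continuous minimum in the limit.

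The main subtlety will be near the boundary of $X$, where the wide stencil in~\autoref{fig:stencil}(b) must be truncated and the missing stencil points replaced by boundary interpolation. In this regime the approximations $\Dt_{\nu_j\nu_j}$ and $\Dt_{\nu_j}$ drop from second- to first-order accuracy, but they still converge pointwise to the corresponding directional derivatives, so the term-by-term argument above carries over; this is the same near-boundary analysis carried out in~\cite{ObermanEigenvalues, FOTheory} and can be invoked directly. A minor additional remark is that consistency is a $h,d\theta,\delta \to 0$ statement independent of the coupling $\delta^{d-1} > K_F|\nu_j|h/2$ imposed in \autoref{thm:conv}, so no rate restriction is needed for this lemma.
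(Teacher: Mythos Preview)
Your proposal is correct and takes the same approach as the paper, which simply defers to \cite[Lemma~6]{FOTheory}: your Taylor-expansion estimate for each $G^{h,d\theta,\delta}_{(\nu_1,\ldots,\nu_d)}$, followed by the limsup/liminf passage to the minimum over $\G$ versus $V$ using approximation of a minimizing basis to within $\bO(d\theta)$, is exactly the content of that cited lemma spelled out in full. One small quibble: the $\bO(|\nu_j|^2 h^2)$ rates you write would require $u\in C^4$ rather than merely $C^2$, but consistency only needs $o(1)$ convergence of the discrete directional derivatives, which does hold for $C^2$ functions, so the argument is unaffected.
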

\begin{proof}
The proof of this is identical to the consistency proof in~\cite[Lemma~6]{FOTheory}.
\end{proof}

\subsection{Hybrid discretization}\label{sec:discHybrid}
As in~\cite{FONum}, we can improve the accuracy of the discretization by using the monotone scheme only in regions of the domain where the solution may be singular.  In smooth regions of the domain, we simply use a standard centered difference discretization.

Given the regularity results described in~\S\ref{sec:reg}, we cannot expect solutions to be smooth at the boundary since the domain $X$ that we are computing on is a square, which is not strictly convex.  However, we can expect more regularity in the interior of the domain as long as the density functions are $C^\alpha$ and are bounded away from 0 and $\infty$.

We first identify $X^s$, which is a neighborhood of any regions where $u$ may be singular:
\[
X^s = 
\partial X \cup
\{ x \in X \mid  f(x)<\epsilon  \}  
\cup
\{ x \in X \mid  f(x)>1/\epsilon  \} 
\cup  
\{ x \in  X \mid f \notin C^\alpha(B(x,\epsilon)) \}.
\]
Here $\epsilon$ is a small parameter, which we can take equal to the spatial resolution $h$.

We define $w(x)$ to be a function that is one in an $h$-neighborhood of $X^s$ and that goes to zero elsewhere.  Then a possible hybrid discretization of the \MA equation is
\bq\label{eq:discHybrid}
MA^{h,d\theta,\delta}[u] = w(x)MA_M^{h,d\theta,\delta}[u] + (1-w(x)) MA_S^h[u].
\eq

\section{Numerical implementation}\label{sec:implementation}
Now that we have described the discretization we will be using, we turn our attention to the remaining details of the numerical implementation of the iteration described in this paper.  In this section, we describe our methods for enforcing boundary conditions, solving the discrete system of equations, and initializing the iteration.  To be concrete, we will describe these issues in the two-dimensional case, but the ideas easily generalize to higher dimensions.

\subsection{Existence and uniqueness}\label{sec:discExist}
We recall here that the iteration~\eqref{eq:it2} requires us to solve not only for the function $u$, but also for the scaling factor $c$ that multiplies the density functions. We can simply include this as an additional variable in the discrete system of equations.

With the addition of an extra variable, we should expect that we will also require an additional equation.  This is reasonable since solutions of the Neumann problem are only unique up to an additive constant.  We simply add an extra equation that forces $u$ to be zero at one corner of the domain. 

\subsection{Boundary conditions}\label{sec:discBC}
We also need to discretize the Neumann boundary conditions.

Our computational domain is the square, which means we must impose values for $u_{x_1}$ on the left and right sides of the domain and for $u_{x_2}$ on the top and bottom edges of the domain.

We accomplish this by adding a layer of ghost points around the outside of our computational domain.  The value of the normal derivatives on the boundary can then be discretized using simple centered differences.  For example, at a point on the left edge ($x_1=x_{\min}$), we can discretize the normal derivative as
\[ u_\nv(x) = \frac{1}{2h}(u(x_{\min}+h,x_2)-u(x_{\min}-h,x_2)). \]
The use of ghost points ensures that all values needed in this discretization are available.

We also need to provide four more equations at the corner points in our grid.  We specify the value of the derivative in the ``diagonal'' direction ($(1,1)$, $(1,-1)$, $(-1,1)$, or $(-1,-1)$) that points outward from the grid at each of these four points.  This is enforced using centered differences.  So, for example, at the points $(x_{1,\min},x_{2,\min})$ we require that
\begin{multline*} \frac{1}{2\sqrt{2}h}(u(x_{1,\min}-h,x_{2,\min}-h) - (x_{1,\min}+h,x_{2,\min}+h)) =\\ -\frac{1}{\sqrt{2}}(u_{x_1}(x_{1,\min},x_{2,\min}) + u_{x_2}(x_{1,\min},x_{2,\min})). \end{multline*}
As before, the ghost points ensure that all of these values are available.

\subsection{Newton's method}\label{sec:newton}
The discretization of the \MA equation described in~\S\ref{sec:discretize} results in a system of equations that can be solved efficiently using Newton's method.

This involves performing the iteration
\[ u^{k+1} = u^k - v^k,\quad c^{k+1} = c^k - d^k \]
where the correctors $v^k$, $d^k$ are obtained by solving the equation
\[ \nabla MA[u^k,c^k](v^k,d^k)^T = MA[u^k,c^k]. \]
As long as the initial iterate $u^0$ satisfies the given Neumann boundary condition, we can simply enforce a homogeneous Neumann condition on the corrector $v^k$ at each step.

Although we are using a hybrid discretization, the weight function that determines the discretization is independent of the iterates $u^k$, $c^k$.  This means that we can compute the Jacobians of the monotone and standard discretizations and obtain the Jacobian of the hybrid system via
\[ \nabla MA[u,c] = w(x) \nabla MA_M[u,c] + (1-w(x)) \nabla MA_S[u,c]. \]

We begin by computing the Jacobian of the monotone discretization.  We recall that this discretization has the form
\[ MA_M[u,c] = \min\limits_{(\nu_1,\ldots,\nu_d)\in\G} G_{(\nu_1,\ldots,\nu_d)}[u,c]. \]
By Danskin's Theorem~\cite{Bertsekas}, we can write the Jacobian of this as
\[ \nabla MA_M[u,c] = \nabla G_{(\nu_1,\ldots,\nu_d)}[u,c],\]
where the $(\nu_1,\ldots,\nu_d)$ are the directions active in the minimum.

This Jacobian can be broken down into two basic components: the gradient with respect to the solution vector $u$ and the gradient with respect to the scaling factor $c$.  The first component is given by:
\begin{multline*} 
\nabla_{u_i} G_{(\nu_1,\ldots,\nu_d)}[u,c] = \sum\limits_{m=1}^d\left[\left(\prod\limits_{j\neq m}\max\{\Dt_{\nu_j\nu_j}u_i,\delta\}\right)\one_{\Dt_{\nu_j\nu_j}u_i\geq\delta} + 
\one_{\Dt_{\nu_j\nu_j}u_i<\delta}\right]\Dt_{\nu_m\nu_m}\\
 - c\sum\limits_{m=1}^d\frac{\partial F}{\partial p_m}\left(x,\sum\limits_{j=1}^d\frac{\nu_j\cdot \ev_1}{\abs{\nu_j}}\Dt_{\nu_j}u_i,\ldots,\sum\limits_{j=1}^d\frac{\nu_j\cdot \ev_d}{\abs{\nu_j}}\Dt_{\nu_j}u_i\right)
 \sum\limits_{j=1}^d\frac{\nu_j\cdot\ev_m}{\abs{\nu_j}}\Dt_{\nu_j}.
\end{multline*}
The final component is given by
\[ \nabla_c G_{(\nu_1,\ldots,\nu_d)}[u,c] = -F\left(x,\sum\limits_{j=1}^d\frac{\nu_j\cdot \ev_1}{\abs{\nu_j}}\Dt_{\nu_j}u_i,\ldots,\sum\limits_{j=1}^d\frac{\nu_j\cdot \ev_d}{\abs{\nu_j}}\Dt_{\nu_j}u_i\right).\]

We also require the Jacobian of the standard discretization.  In this case, the first component of the Jacobian (in two dimensions) is simply
\begin{multline*}  
\nabla_{u_i} MA_S[u,c] = (\Dt_{x_2x_2}u_i)\Dt_{x_1x_1} + (\Dt_{x_1x_1}u_i)\Dt_{x_2x_2} + 2(\Dt_{x_1x_2}u_i)\Dt_{x_1x_2} \\ 
-c\frac{\partial F}{\partial p_1}(x,\Dt_{x_1} u_i,\Dt_{x_2}u_i)\Dt_{x_1} - c\frac{\partial F}{\partial p_2}(x,\Dt_{x_1} u_i,\Dt_{x_2}u_i)\Dt_{x_2}
\end{multline*}
and the second component is
\[ \nabla_c MA_S[u,c] = -F(x,\Dt_{x_1} u_i,\Dt_{x_2}u_i). \]

\subsection{Initialization}\label{sec:init}
The iterations we have described in this paper also need to be initialized.  There are really two aspects to this: we need to initialize $u$ and $c$ each time we solve the \MA equation and we also need to initialize our estimation of the boundary conditions $\phi(x)$.

\subsubsection{Initialization of boundary data}
First we discuss the initialization of the boundary data $\phi^0$ in the iteration~\eqref{eq:it2}.  The simplest approach would be to extract boundary conditions from the identity map $s(x) = x$.  However, if this mapping does not overlap with the target set $Y$, the iteration is likely to fail.  

We can remedy this problem by instead extracting boundary data from the scaled identity map $s(x) = Mx$ where the constant M is chosen large enough so that the set $s(X)$ encompasses the target set $Y$.

Once this constant is chosen, we simply choose the initial boundary condition
\[ \phi^0(x) = Mx\cdot\nv(x),\quad x\in\partial X. \]

We can accelerate the convergence of this method by first solving the transport problem on a coarser grid, then interpolating the resulting boundary data onto the refined mesh.

\subsubsection{Initialization of Newton's method}
We also need to initialize Newton's method each time we solve the \MA equation.
We can use the approach described in~\cite{FOTheory}, which involves obtaining the initial guess by solving the equation
\[
\Delta u(x) = (c{d!f(x)/g(x-x_0)})^{1/d}
\]
where $x_0$ is a point in the interior of the target set $Y$.

Since we will be solving the \MA equation multiple times with different boundary conditions, we can also accelerate the convergence of the $(k+1)^{st}$ iteration by initializing with the solution found during the previous solve ($u^k$).  One important point here is that the boundary data changes from step to step.  Thus it is important to change the values of $u^k$ at the boundary points so as to ensure that correct boundary conditions are satisfied.

\section{Computational results: the \MA equation}\label{sec:resultsMA}
In this section, we provide computational results for several different examples.  We have really introduced two ideas in this paper: a discretization for \MA type equations and a method for implementing the transport boundary  condition.  In this section, we focus on testing our \MA solver.  To keep this idea clear, we restrict ourselves to the problem of mapping rectangles to rectangles; in this case, our method for the transport problem is reduced to a single \MA solve with Neumann boundary conditions (see~\S\ref{sec:neumann}).

In each example our domain is a square, which is discretized on an $N\times N$ grid using a 17 point stencil.  We let $h = 1/(N-1)$ denote the spatial resolution of the grid and let $M = N^2$ denote the total number of grid points.  
The computations were done in MATLAB on a laptop with a 2 GHz Intel processor.  

When an exact solution $u^{exact}$ is available, we provide the maximum error in the gradient map:
\[ \text{Error} = \max\{\|u^{exact}_{x_1}-u_{x_1}\|_\infty,\|u^{exact}_{x_2}-u_{x_2}\|_\infty\}. \]
We also provide the total number of Newton iterations and computation time required for each example.

The examples we consider include:
\begin{itemize}
\item A (linear) map between gaussian densities.
\item A comparison between a map obtained by solving the direct problem and a map obtained by inverting the solution to the inverse problem.
\item A map from a uniform density onto a density that blows up at a point.
\item A map between two brain MRI images.
\end{itemize}

\subsection{Gaussian densities}\label{sec:gaussian}
We begin by showing that we can recover a linear mapping between two rectangles with gaussian densities.  We consider the problem of mapping the square $(-0.5,0.5)\times(-0.5,0.5)$ onto the rectangle $(0.5,1.5)\times(-1,1)$ with the density functions:
\[ f(x_1,x_2) = \frac{1}{0.16}\exp{\left(-\frac{1}{2}\frac{x_1^2}{0.4^2}-\frac{1}{2}\frac{x_2^2}{0.4^2}\right)},\] 
\[g(y_1,y_2) = 
  \frac{1}{0.08}\exp{\left(-\frac{1}{2}\frac{(y_1-1)^2}{0.4^2}-\frac{1}{2}\frac{y_2^2}{0.2^2}\right)}. \]
In this case, we have an explicit expression for the optimal map:
\[ u_{x_1} = x_1 + 1,\quad u_{x_2} = \frac{1}{2}x_2. \]

We present the results in \autoref{table:gaussian} and \autoref{fig:gaussian}.  
In this example, we can actually achieve machine accuracy (if we take enough Newton steps).    This is because the exact solution is simply a linear map, which will exactly solve the discretized system of equations.
In addition to this, we find that the Newton solver for the \MA equation converges in $\bO(M)$ time.

\begin{figure}[htdp]
	\centering
%	\subfigure[]{\includegraphics[width=.48\textwidth]{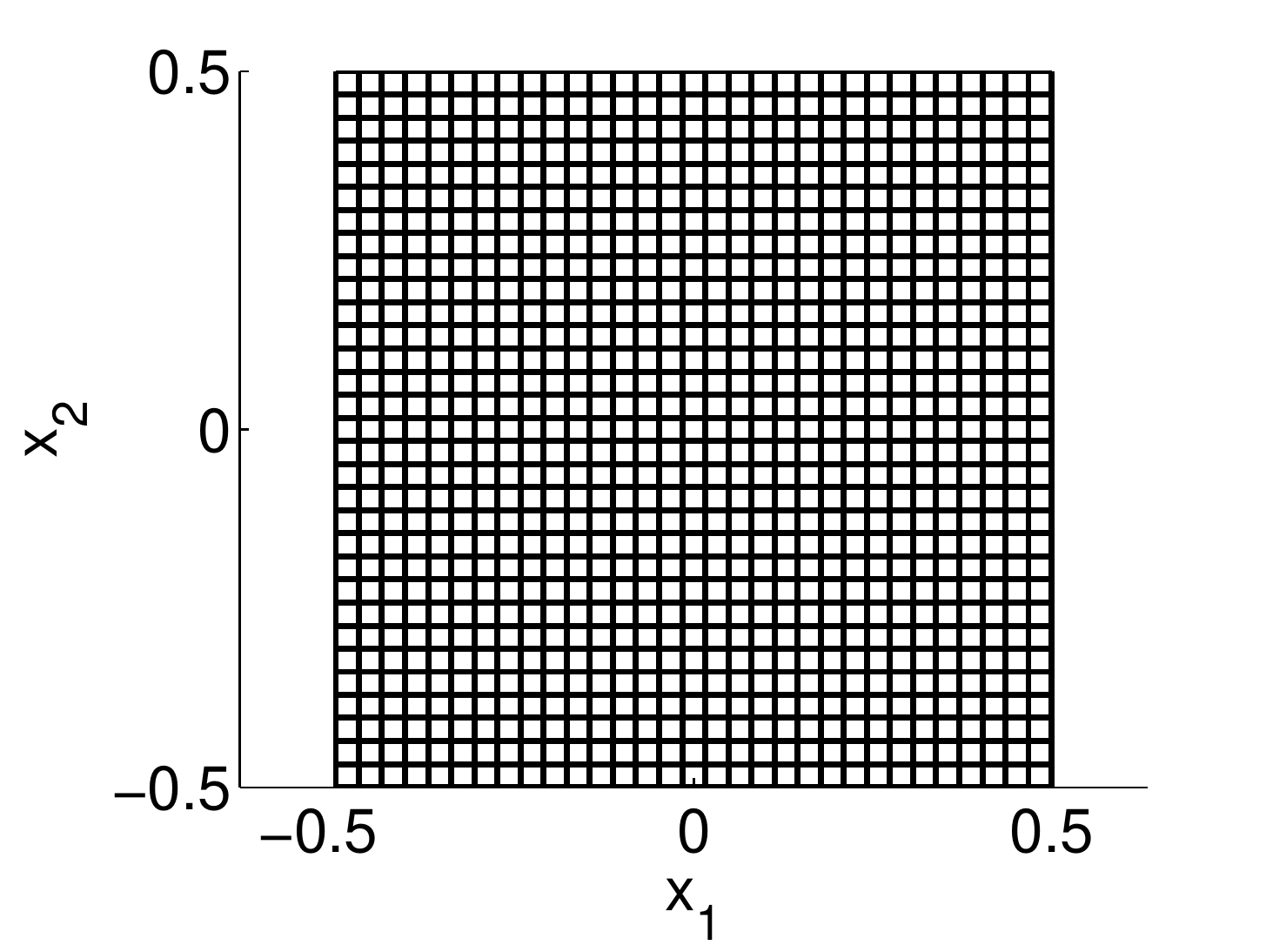}\label{fig:squareX}}
%        \subfigure[]{\includegraphics[width=.48\textwidth]{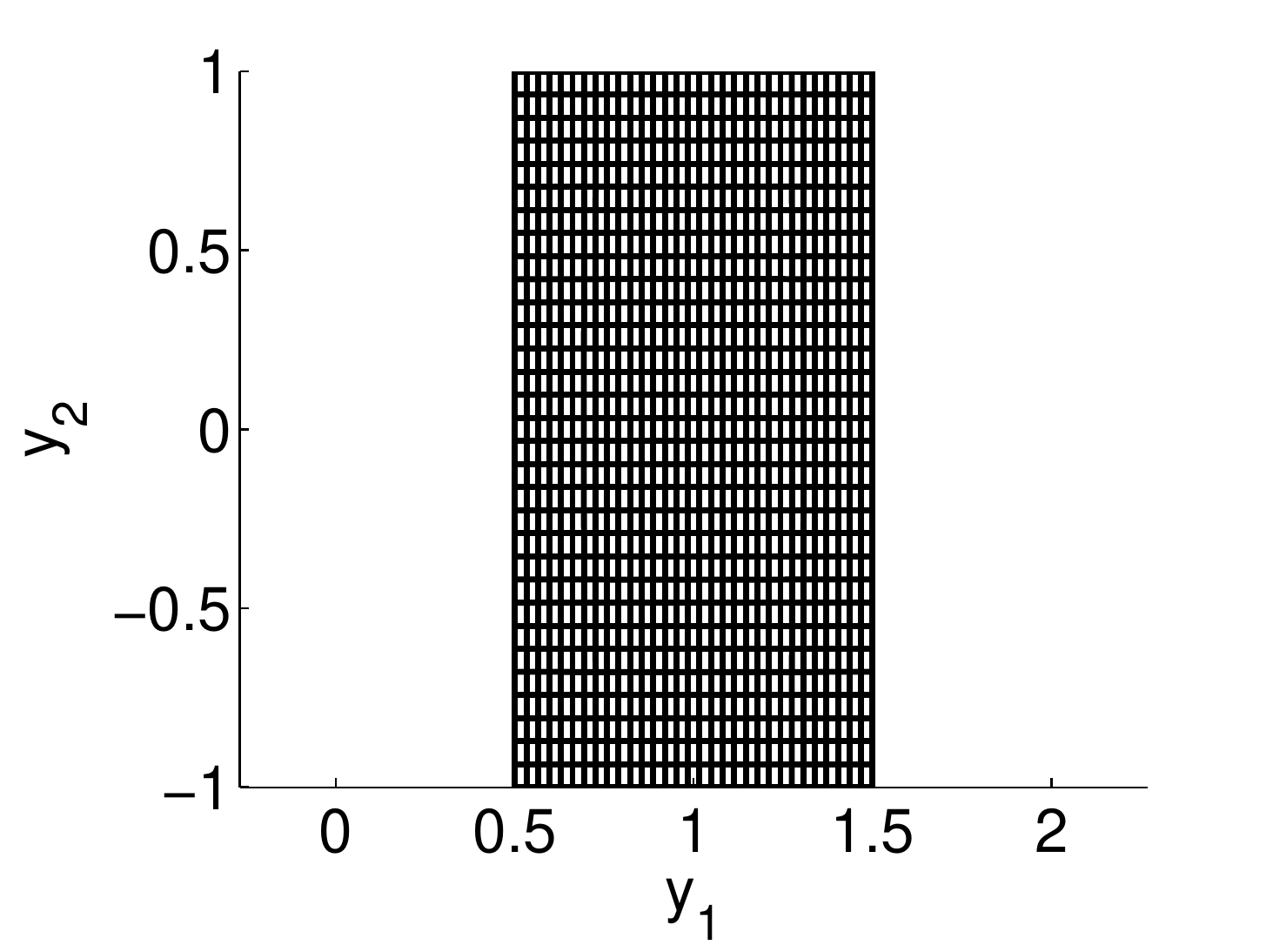}\label{fig:gaussY}}
	\subfigure[]{\includegraphics[width=.4\textwidth]{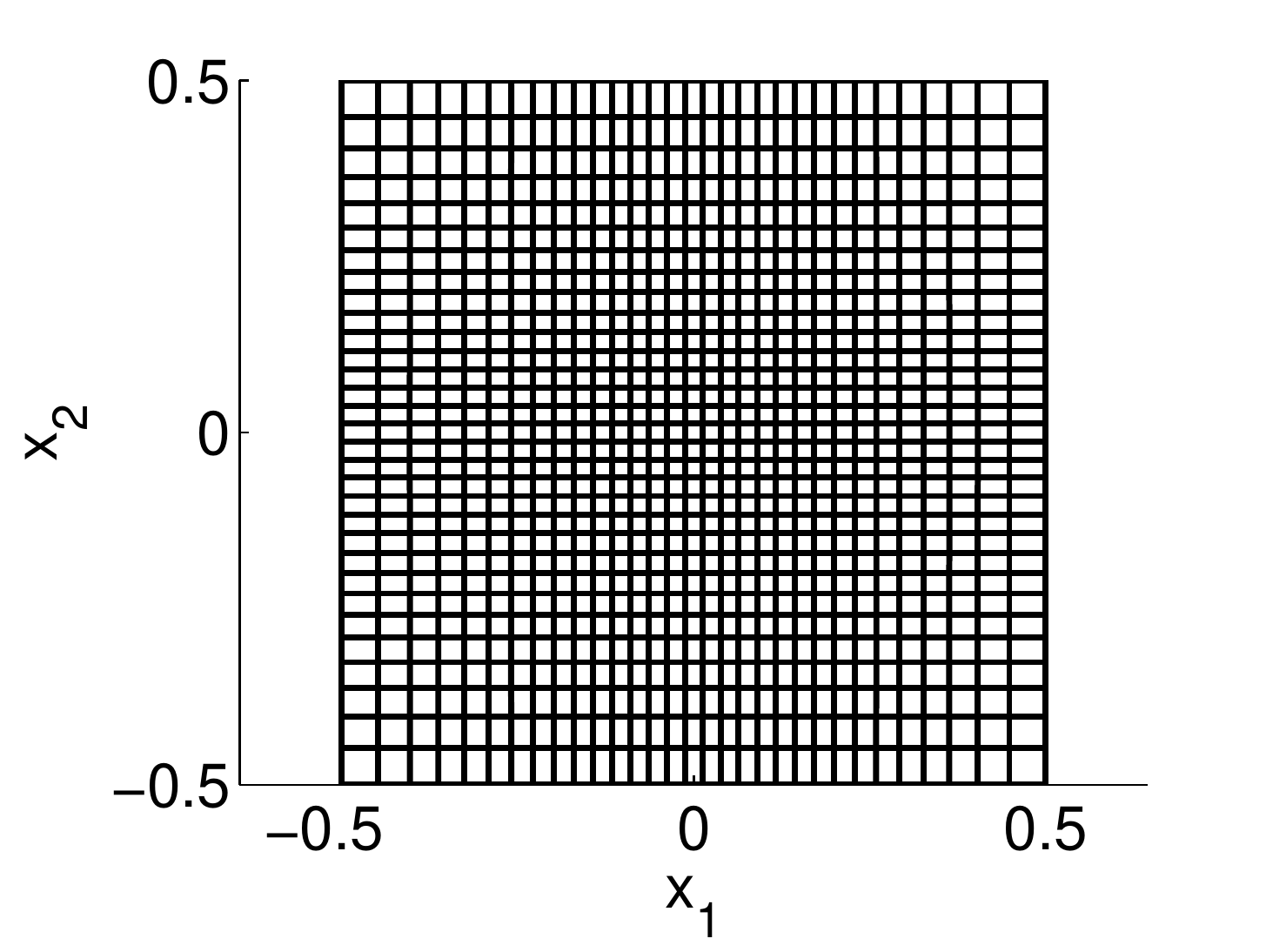}\label{fig:gaussianX}}
	\subfigure[]{\includegraphics[width=.4\textwidth]{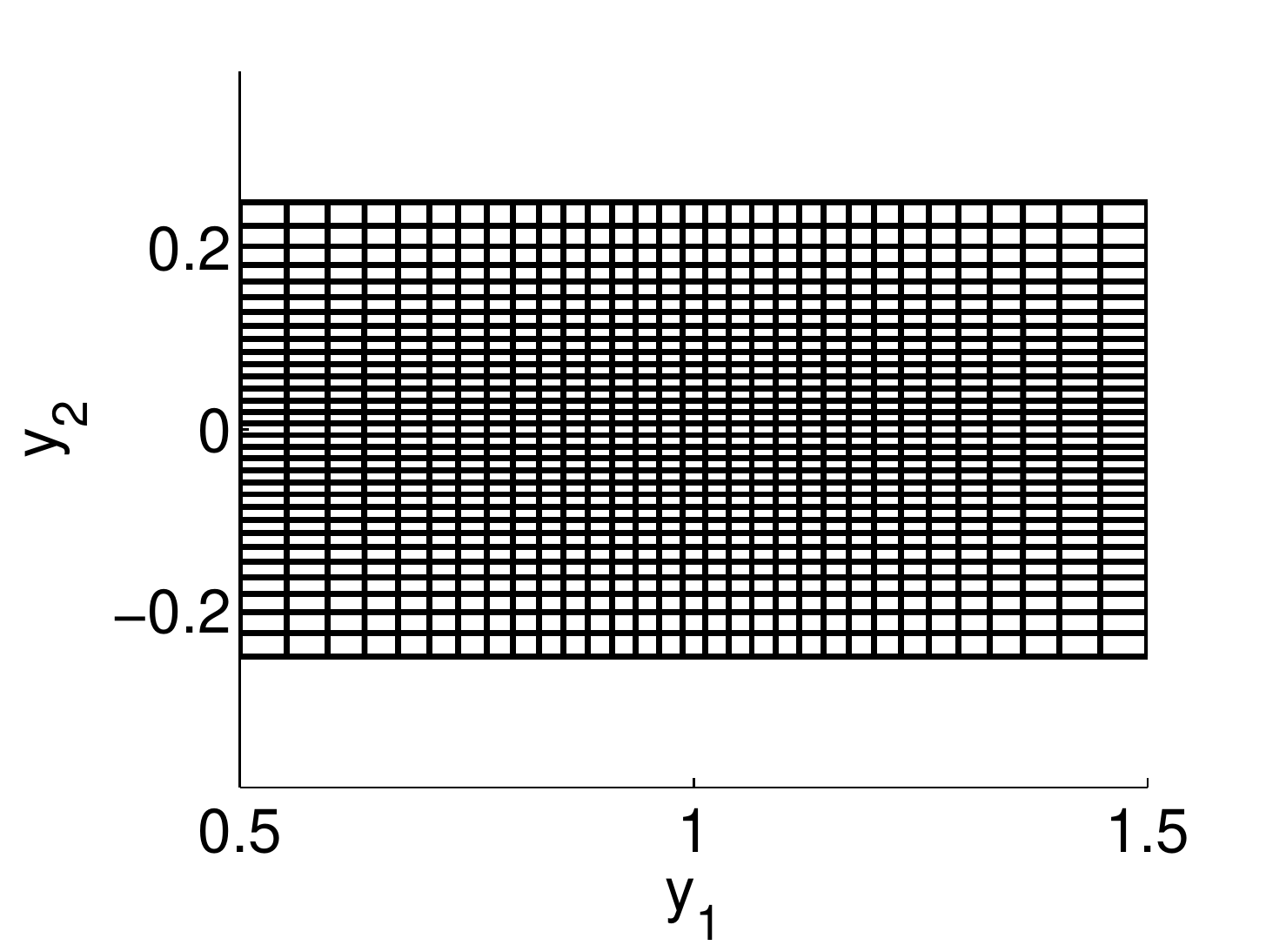}\label{fig:gaussianY}}
  	\vspace*{-12pt}\caption{
  	\subref{fig:gaussianX} A mesh with gaussian density $f$ and \subref{fig:gaussianY} its image under the gradient map $\nabla 	u$~(\S\ref{sec:gaussian}). }
  	\label{fig:gaussian}
\end{figure} 

\begin{table}[htdp]\small
\begin{center}
\begin{tabular}{ccccc}
N  & h & Newton Iterations & CPU Time (s) & {Maximum Error} \\
\hline
32 & 0.0323 & 1 & 0.1 & $5.71\times10^{-8}$ \\
46 & 0.0222 & 1 & 0.2 & $3.34\times10^{-8}$ \\
64 & 0.0159 & 1 & 0.3 & $0.26\times10^{-8}$ \\
90 & 0.0112 & 1 & 0.6 & $0.18\times10^{-8}$ \\
128 & 0.0079 & 1 & 1.1 & $0.13\times10^{-8}$\\
182 & 0.0055 & 1 & 2.4 & $0.09\times10^{-8}$\\
256 & 0.0039 & 1 & 5.3 & $0.07\times10^{-8}$\\
362 & 0.0028 & 1 & 12.4 & $0.05\times10^{-8}$
\end{tabular}
\end{center}
\caption{Computation time and maximum error for the map between two gaussian densities (\S\ref{sec:gaussian}).}
\label{table:gaussian}
\end{table}

\subsection{Recovering an inverse map}\label{sec:inverse}
For our next example, we consider another problem with an exact solution, which will be used to verify that we can correctly recover inverse maps.  To set up this example, we define the function
\[ q(z) = \left(-\frac{1}{8\pi}z^2 + \frac{1}{256\pi^3}+\frac{1}{32\pi}\right)\cos(8\pi z) + \frac{1}{32\pi^2}z\sin(8\pi z). \]
Now we map the density 
\[ f(x_1,x_2) = 1+4(q''(x_1)q(x_2)+q(x_1)q''(x_2)) + 16(q(x_1)q(x_2)q''(x_1)q''(x_2)-q'(x_1)^2q'(x_2)^2) \]
in the square $(-0.5,0.5)\times(-0.5,0.5)$ onto a uniform density in the same square.  This transport problem has the exact solution
\[ u_{x_1}(x_1,x_2) = x_1 + 4q'(x_1)q(x_2),\quad u_{x_2}(x_1,x_2) = x_2 + 4q(x_1)q'(x_2). \]

We will solve this problem in two ways:
\begin{itemize}
\item Directly, as in the previous example.
\item By solving the inverse problem (mapping $g$ to $f$) and inverting the resulting map.
\end{itemize}

Results are presented in \autoref{fig:inverse} and \autoref{table:inverse}.  We find that the maps obtained from both the forward and inverse formulations have about $\bO(h^2)$ accuracy.  Both problems are solved in about $\bO(M)$ time.

\begin{figure}[htdp]
	\centering
	\subfigure[]{\includegraphics[width=.4\textwidth]{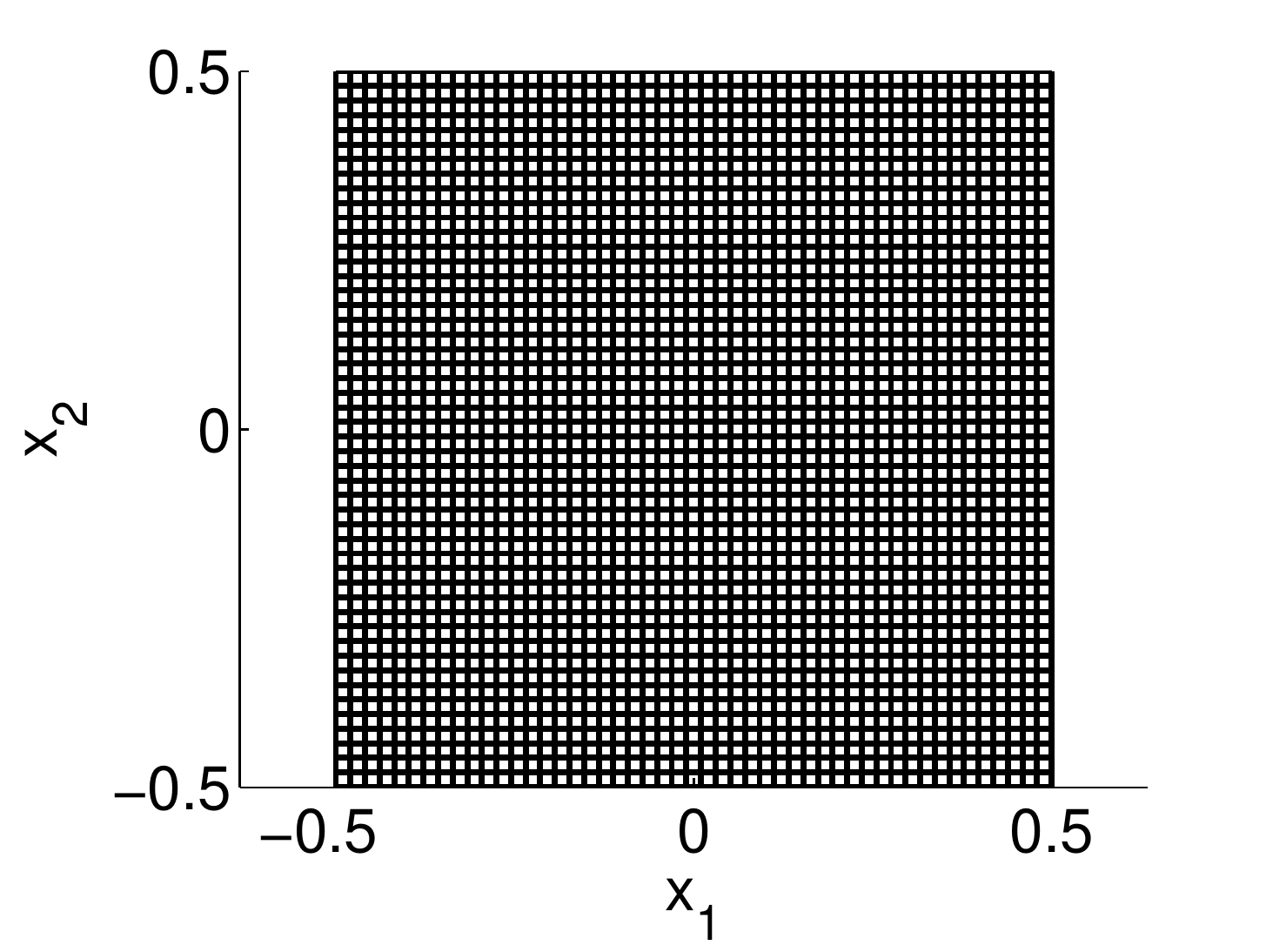}\label{fig:invX}}
	\subfigure[]{\includegraphics[width=.4\textwidth]{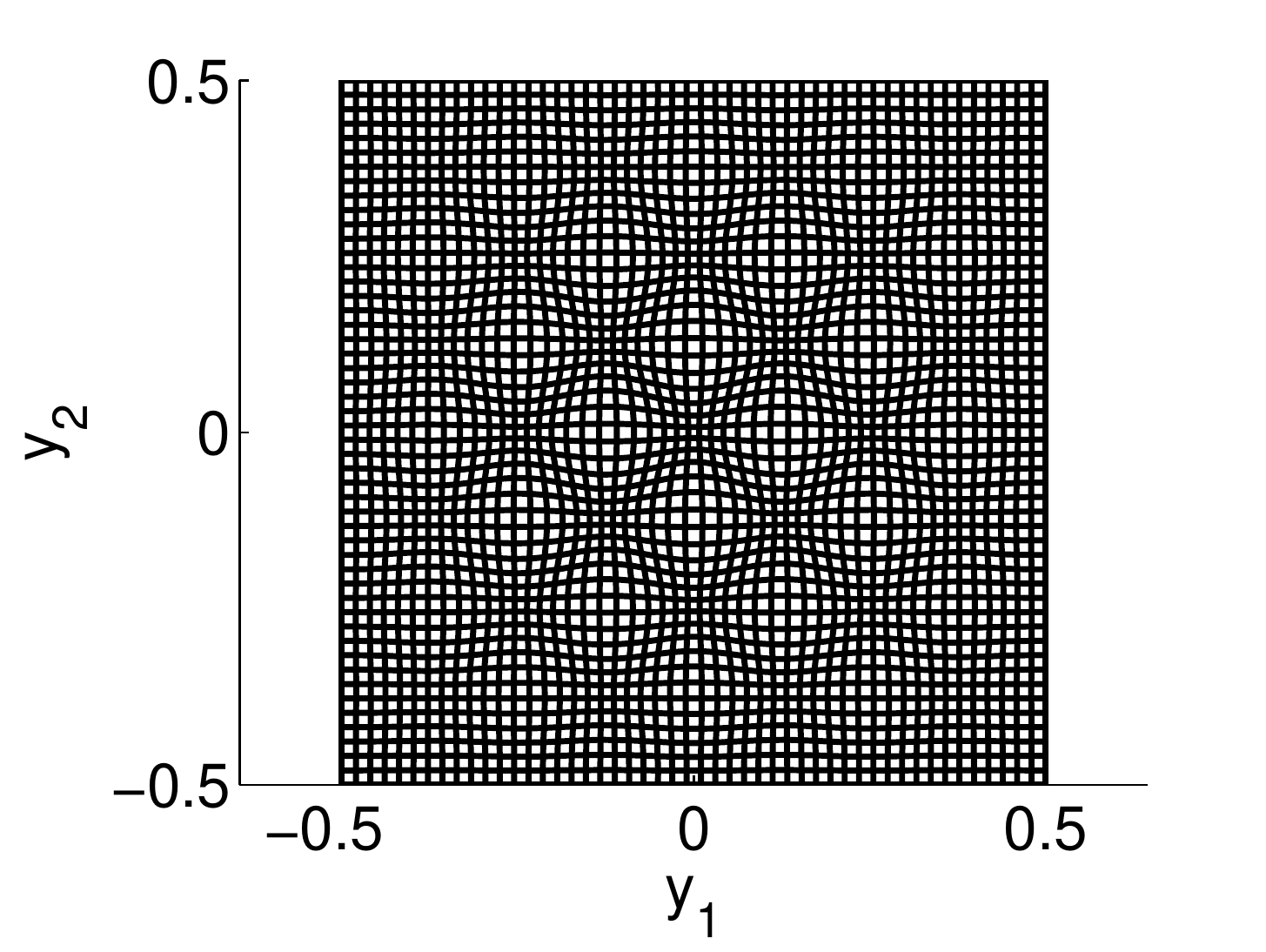}\label{fig:invY}}
  	\vspace*{-12pt}\caption{
  	\subref{fig:invX} A uniform cartesian mesh and \subref{fig:invY} its image under the gradient map $\nabla 	u$ (\S\ref{sec:inverse}). }
  	\label{fig:inverse}
\end{figure} 

\begin{table}[htdp]\small
\begin{center}
\begin{tabular}{ccccccc}
 &  \multicolumn{3}{c}{Forward Problem} & \multicolumn{3}{c}{Inverse Problem} \\
N  &  Iterations & Time (s) & {Max Error} & Iterations & Time (s) & {Max Error}\\
\hline
32 & 3 & 0.2 & $2.476\times10^{-3}$ & 4 & 0.4 & $2.450\times10^3$\\
46 & 2 &  0.2 & $0.631\times10^{-3}$ & 2 & 0.5 & $0.575\times10^3$\\\
64 & 2& 0.5 &   $0.241\times10^{-3}$ & 2& 1.1 & $0.244\times10^3$\\\
90 &  1 & 0.6 & $0.106\times10^{-3}$ & 1& 1.3 & $0.101\times10^3$\\\
128 & 1 & 1.3 & $0.049\times10^{-3}$ & 1& 2.9 & $0.048\times10^3$\\\
182 & 1 & 2.9 & $0.024\times10^{-3}$ & 1 & 5.1 &$0.023\times10^3$\ \\
256 & 1 & 6.3 & $0.012\times10^{-3}$ & 1& 10.9 & $0.011\times10^3$\\\
362 &  1 & 14.0 & $0.006\times10^{-3}$ & 1 & 22.6 &$0.006\times10^3$\
\end{tabular}
\end{center}
\caption{Newton iterations, computation time and maximum error for a map obtained by a direct solve and by inverting the inverse map (\S\ref{sec:inverse}).}
\label{table:inverse}
\end{table}

\subsection{An example with blow-up}\label{sec:exBlowup}
Next we consider the problem of mapping a uniform density onto a density that blows up at a point:
\[ g(y_1,y_2) =  \frac{\exp\left(-2\sqrt{(y_1-0.5)^2+(y_2-0.5)^2}\right)}{\sqrt{(y_1-0.7)^2+(y_2-0.7)^2}}.\]
In this case, both $X$ and $Y$ are the square $(0,1)\times(0,1)$.  This example is taken from~\cite{Delzanno}, which allows us to compare results.  In this example, we slightly regularize the density $g$ (bounding it by a $\bO(1/h^2)$ function) to prevent infinities from appearing.

We present the timing results in Table~\ref{table:blowup}.  We provide not only the number of Newton iterations and computation time, but also the ratio
\[ R = \max\left\{g(y_1,y_2)/f(x_1,x_2)\right\}, \]
since many currently available \MA solvers can become slow or unstable when this ratio is large.
For comparison, we provide the same information for the method of~\cite{Delzanno} (which is essentially our ``standard'' discretization solved with an optimized Newton-Krylov method).  The method of~\cite{Delzanno} runs in $\bO(M)$ time. Our method, though it runs in about $\bO(M^{1.1})$ time, has lower computation times and deals with larger density ratios.  Naturally, we cannot conclude too much from the comparison of computation times since the computations were performed on different computers.  However, it is evident that, in terms of computation time, our method is very competitive with other fast solvers.

We also present the deformed mesh and zoom into the region of high density to verify that our method has produced an untangled mesh; see Figure~\ref{fig:blowup}.

\begin{table}[htdp]\small
\begin{center}
\begin{tabular}{ccccccc}
 & \multicolumn{3}{c}{Hybrid Method} & \multicolumn{3}{c}{Method of~\cite{Delzanno}}\\
N   & R & Iterations & CPU Time (s) & R & Iterations & CPU Time (s) \\
\hline
32  & 546 & 4& 0.2 & 356 & 6 & 1  \\
46  &  1,151 & 4 & 0.3 & --- & --- & --- \\
64  &  2,254 & 5 & 0.8 & 1,127 & 7 & 4 \\
90  &  4,066 & 5 & 1.6 & --- & --- & ---\\
128  & 9,162 & 5 & 3.5 & 2,829 & 7 & 17.4\\
182  & 18,608 & 5 & 8.3 & --- & --- & ---\\
256  & 36,933 & 5 & 19.4 & 8,886 & 7 & 70\\
362  & 74,018 & 4 & 36.3 & --- & --- & ---
\end{tabular}
\end{center}
\caption{Ratio of density functions, Newton iterations, and total computation time for the hybrid method~(\S\ref{sec:discHybrid},\ref{sec:newton}) and the method of~\cite{Delzanno}.}
\label{table:blowup}
\end{table}

\begin{figure}[htdp]
	\centering
	\subfigure[]{\includegraphics[width=.4\textwidth]{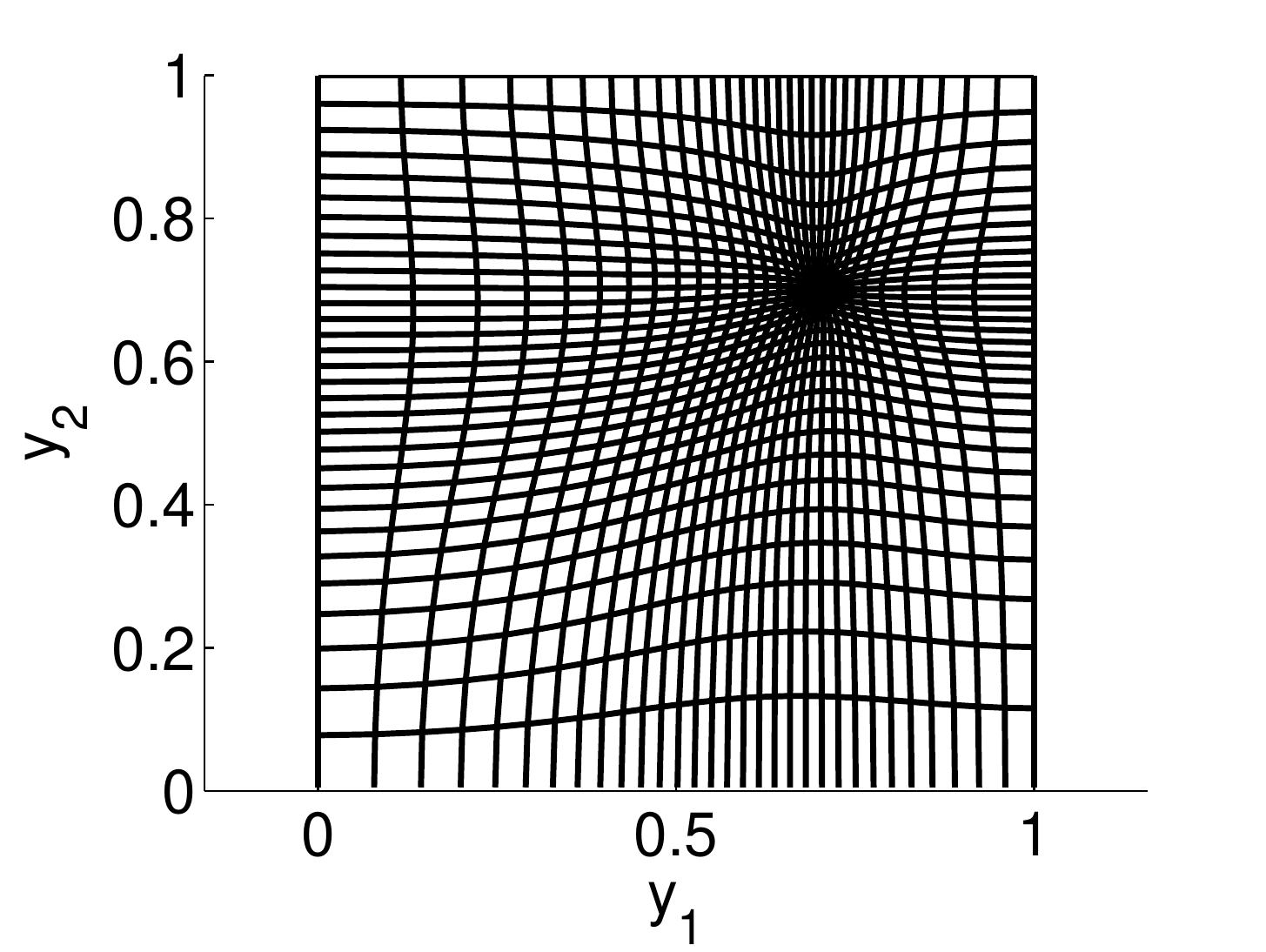}\label{fig:blowupMesh}}
	\subfigure[]{\includegraphics[width=.4\textwidth]{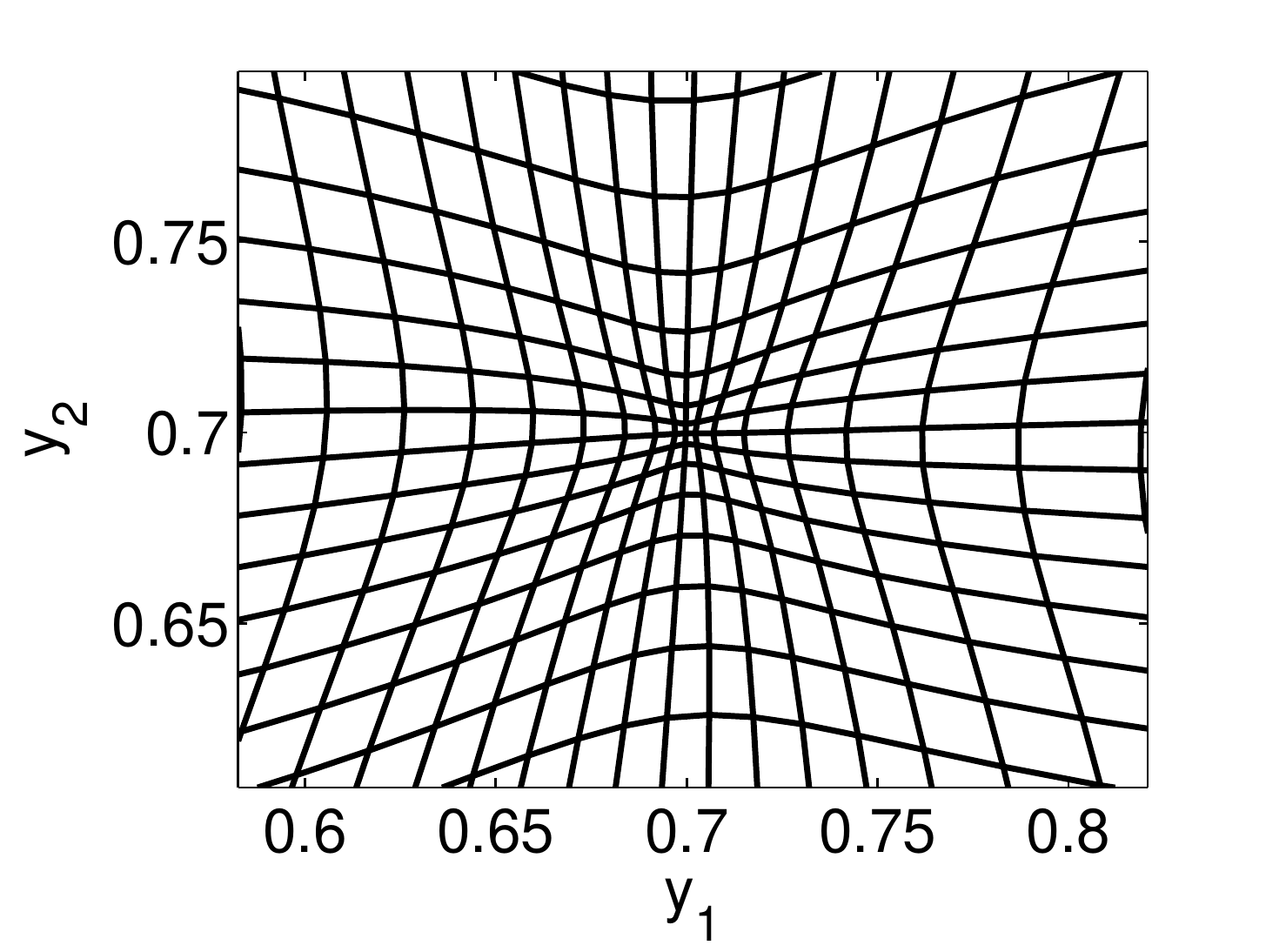}\label{fig:blowupZoom}}
  	\vspace*{-12pt}\caption{
  	\subref{fig:blowupMesh} The image of a cartesian mesh under the gradient map $\nabla 	u$ (\S\ref{sec:exBlowup}) and \subref{fig:blowupZoom} a zoomed in view of the same mesh in the region of large density. }
  	\label{fig:blowup}
\end{figure}

\subsection{Mapping between brain MRI images}\label{sec:brains}
We conclude this section with an example from image processing.  In this example, we obtain our density functions from the pixel intensities in two synthetic brain MRI images~\cite{brainweb,CollinsBrains,CocoscoBrains}.  The images are shown in Figures~\ref{fig:brain1}-\ref{fig:brain2}.  In this case, the regions $X$ and $Y$ are identical and are equal to the unit square.  The fully resolved images contain $256\times256$ pixels.  For the computations presented here, we have also interpolated both images onto coarser grids so that in each case we are mapping an $N\times N$ grid onto the density function obtained from an $N\times N$ image.  

In this example, the density functions have large gradients, which effectively increase as we map onto more refined images.  The solver now runs in about $\bO(M^{1.1})$ time; see~\autoref{table:brains}.

%Figures~\ref{fig:brainf2}-\ref{fig:braing2} show the mesh with density given by the first image (this is obtained by mapping a uniform mesh onto the first brain) and its image under the gradient map.  While the thickness of the grid lines obscures some of the fine details in the brain images, it is evident that the density of the grid lines is mimicing the pixel intensity in the images.  
Figures~\ref{fig:brainMapped}-\ref{fig:brainErr} show the image we obtain by solving the \MA equation and interpolating and the error in this image.   The mapped image we obtain agrees well with the given image.  Not surprisingly, the largest error occurs around the edges of the brain where the density function is essentially discontinuous; consequently, small errors in the map can lead to large errors in estimated pixel intensity.

\begin{figure}[htdp]
	\centering
	\subfigure[]{\includegraphics[width=.3\textwidth]{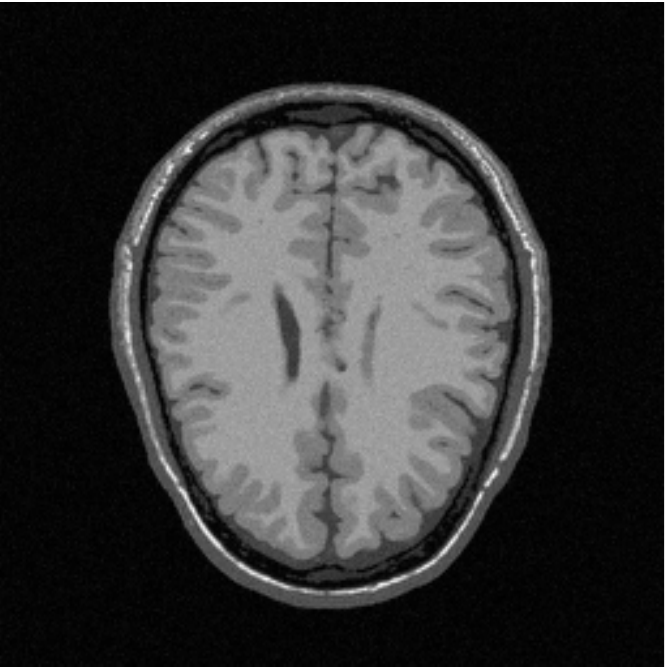}\label{fig:brain1}}
        \subfigure[]{\includegraphics[width=.3\textwidth]{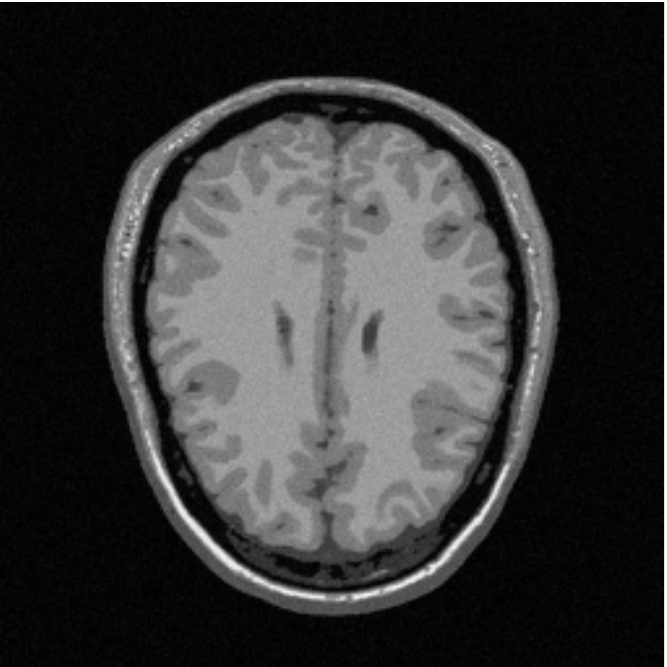}\label{fig:brain2}} \\
%        \subfigure[]{\includegraphics[width=.4\textwidth]{brainf2}\label{fig:brainf2}}
%        \subfigure[]{\includegraphics[width=.4\textwidth]{braing2}\label{fig:braing2}}
        \subfigure[]{\includegraphics[width=.3\textwidth]{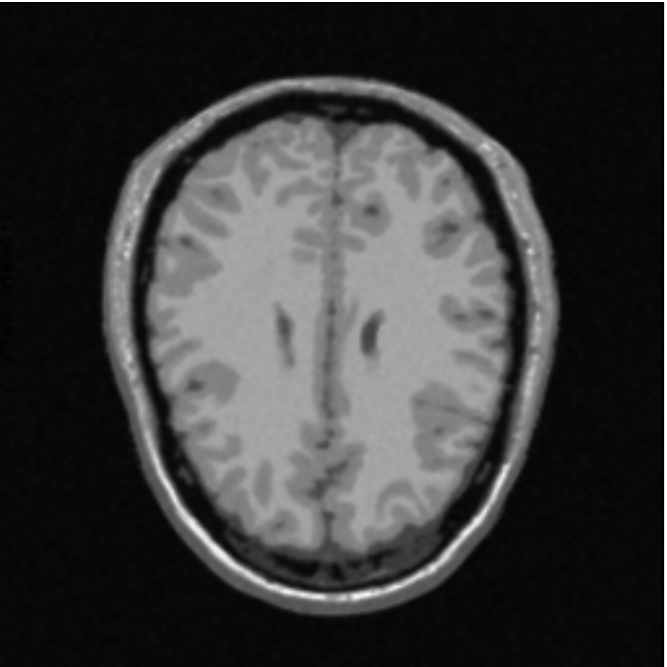}\label{fig:brainMapped}}
        \subfigure[]{\includegraphics[width=.3\textwidth]{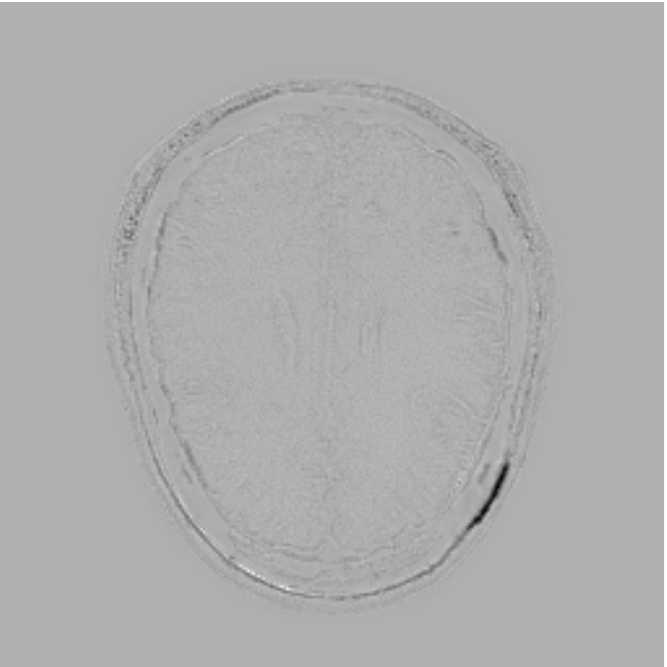}\label{fig:brainErr}}
  	\vspace*{-12pt}\caption{\subref{fig:brain1} The initial density function $f$, \subref{fig:brain2} the final density function $g$,  \subref{fig:brainMapped} the image obtained by solving the \MA equation and interpolating, and \subref{fig:brainErr} the error in the resulting image.}
  	\label{fig:brains}  	
\end{figure}

%\begin{figure}[htdp]
%	\centering
%        \subfigure[]{\includegraphics[width=\textwidth]{brainf2}\label{fig:brainf2}}
%        \subfigure[]{\includegraphics[width=\textwidth]{braing2}\label{fig:braing2}}
%  	\caption{\subref{fig:brainf2} A mesh with density function given by the first image, \subref{fig:braing2} its image under the gradient map $\nabla 	u$ (\S\ref{sec:brains})}
%  	\label{fig:brains}  	
%\end{figure} 

\begin{table}[htdp]\small
\begin{center}
\begin{tabular}{ccc}
N   & Newton Iterations& CPU Time (s)  \\
\hline
32 & 7 & 1.1\\
46 & 7 & 1.2\\
64 & 9 & 3.0\\
90 & 10 & 7.0\\
128 & 12 & 13.7\\
182 & 12 & 34.9\\
256 & 13 & 81.6
\end{tabular}
\end{center}
\caption{Computation time for a map between two brain MRI images (\S\ref{sec:brains}).}
\label{table:brains}
\end{table}

\section{Computational results: optimal transport}\label{sec:resultsTransport}
In this section, we turn our attention to computational results for the transport problem. In each example, we embed our domain in the square $(-0.5,0.5)\times(-0.5,0.5)$ (setting the density $f=0$ outside our domain $X$).  While this can lead to singularities in the solutions, our methods are robust enough to handle this non-smoothness.

In each case, we present the total number of \MA solves required on the $N\times N$ grid (this does not include solves performed on coarser grids during the initialization process), as well as the total computation time required.  When an exact solution is available for comparison, we provide the maximum error in the map:
\[ \text{Error} = \max\{\|u^{exact}_{x_1}-u_{x_1}\|_\infty,\|u^{exact}_{x_2}-u_{x_2}\|_\infty\}. \]

The examples considered in this section include:
\begin{itemize}
\item A map between two ellipses, for which an exact solution is available for comparison.
\item A map from two disconnected semi-circles onto a circle, for which an exact solution is available for comparison.
\item A map from a square onto a convex polygon, which is neither smooth nor strictly convex, together with recovery of the inverse map.
\item A map from a square onto a non-convex region.
\end{itemize}

\subsection{Mapping an ellipse to an ellipse}\label{sec:exEllipse}
First we consider the problem of mapping an ellipse onto an ellipse.  
To describe the ellipses, we let $M_x,M_y$ be symmetric positive definite matrices and let $B_1$ be the unit ball in $\R^d$.
Now we take $X = M_xB_1$, $Y = M_yB_2$ to be ellipses with constant densities $f$, $g$ in each ellipse.

In $\R^2$, the optimal map can be obtained explicitly~\cite{MOEllipse} from
\[ \nabla u(x) = M_yR_\theta M_x^{-1}x\]
where $R_\theta$ is the rotation matrix
\[ R_\theta = \left(\begin{array}{cc} \cos(\theta) & -\sin(\theta)\\ \sin(\theta) & \cos(\theta)\end{array}\right), \]
the angle $\theta$ is given by
\[ \tan(\theta) = \trace(M_x^{-1}M_y^{-1}J)/\trace(M_x^{-1}M_y^{-1}), \]
and the matrix $J$ is equal to
\[ J = R_{\pi/2} = \left(\begin{array}{cc} 0 & -1\\ 1 & 0\end{array}\right). \]

We use the particular example
\[ M_x = \left(\begin{array}{cc}0.4 & 0\\0 & 0.2 \end{array}\right) , 
\quad M_y = \left(\begin{array}{cc} 0.3 & 0.1\\0.2 & 0.4\end{array}\right),\]
which is pictured in \autoref{fig:ellipses}.

Projections onto the ellipse at each step are accomplished efficiently using the method described in~\cite{KiselevProject}.

Computational results are presented in \autoref{table:ellipses} and \autoref{fig:ellipses}.  The error is decreasing uniformly (about $\bO(h^{0.8})$).  We cannot expect high accuracy for this example due to its degeneracy: the density $f$ vanishes in part of the domain.  This means that the lower accuracy monotone stencil is needed in this region, which will in turn affect the error in the map.

Despite the degeneracy of this example and the multiple \MA solves required to initialize and solve this problem, the computation requires only  $\bO(M^{1.1})$ time.

\begin{figure}[htdp]
	\centering
	\subfigure[]{\includegraphics[width=.4\textwidth]{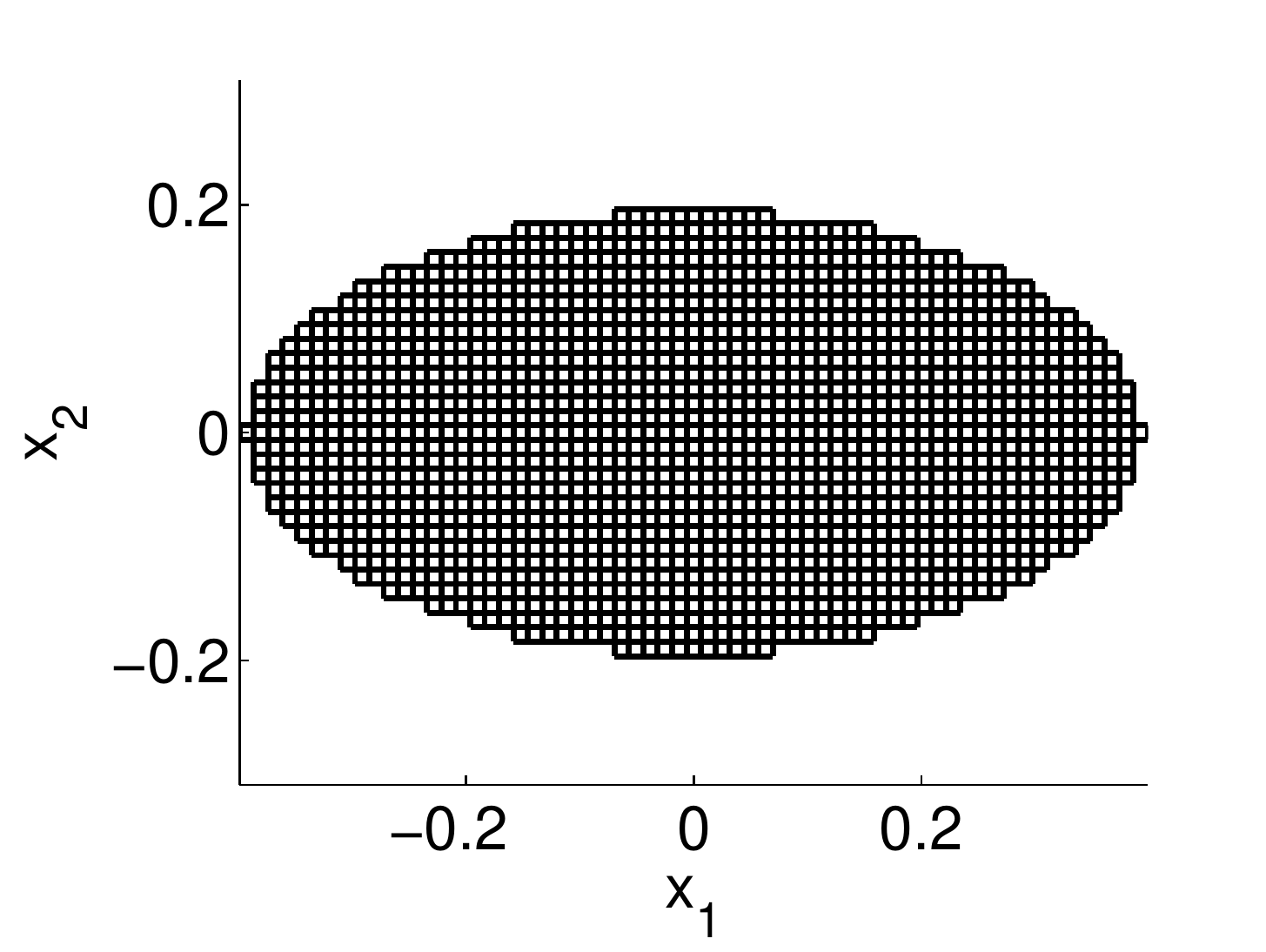}\label{fig:ellipseX}}
        \subfigure[]{\includegraphics[width=.4\textwidth]{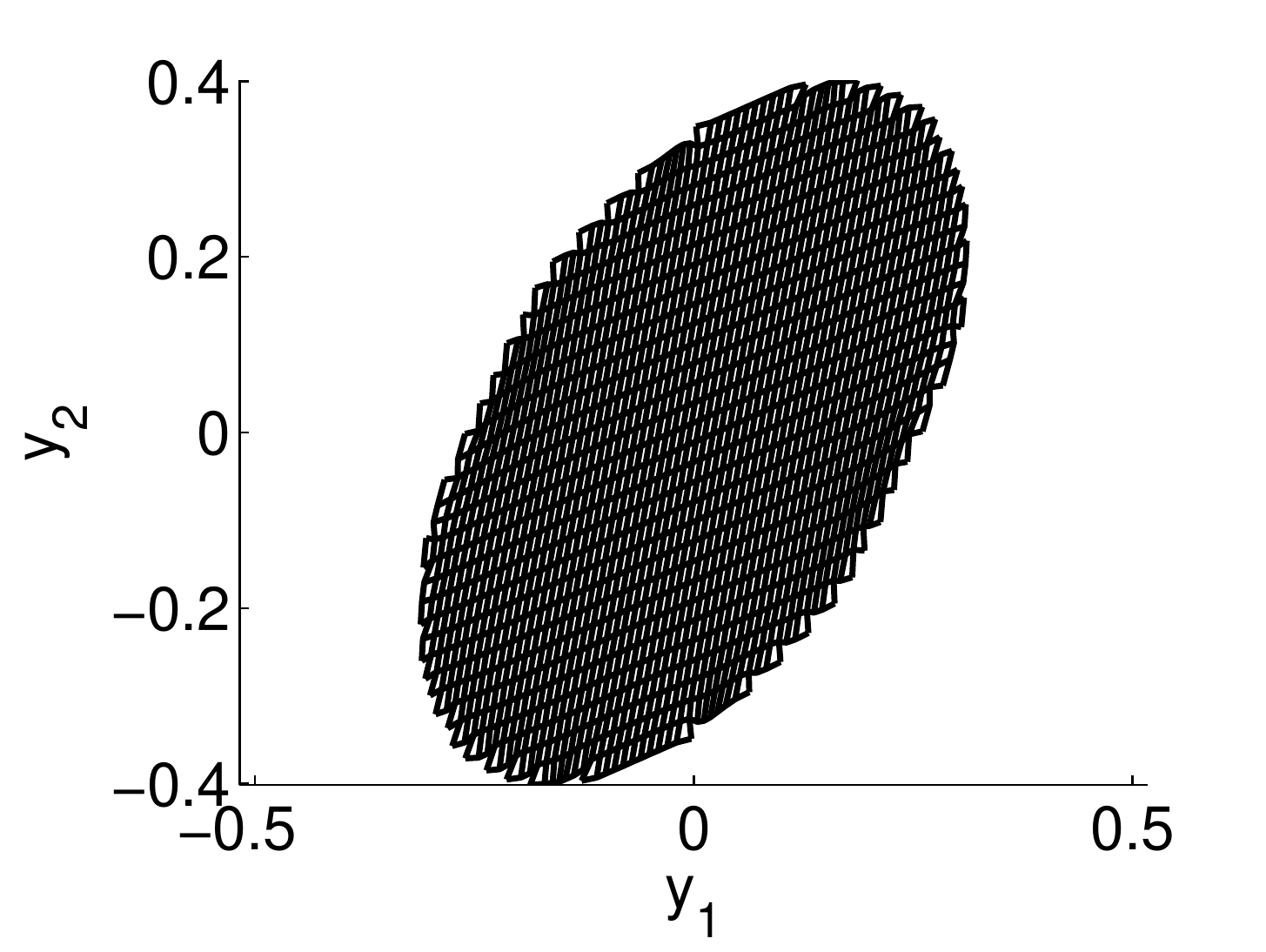}\label{fig:ellipseY}}
  	\vspace*{-12pt}\caption{\subref{fig:ellipseX} A cartesian mesh in the ellipse $X$ and \subref{fig:ellipseY} its image under the gradient map $\nabla u$~(\S\ref{sec:exEllipse}).}
  	\label{fig:ellipses}
\end{figure} 

\begin{table}[htdp]\small
\begin{center}
\begin{tabular}{ccccc}
N  & h & \eqref{eq:MA} Solves & CPU Time (s) & {Maximum Error} \\
\hline
32 & 0.0323 & 4 & 0.7 &  0.0264\\
46 & 0.0222 & 13 & 1.7 &  0.0180\\
64 & 0.0159 &  3 & 1.8 & 0.0152\\
90 & 0.0112 &  6 & 5.5 & 0.0117\\
128 & 0.0079 & 3 & 9.9 & 0.0083\\
182 & 0.0055 & 3 & 25.3 & 0.0060\\
256 & 0.0039 & 2 & 61.9 & 0.0048
\end{tabular}
\end{center}
\caption{Computation time and maximum error for the map between two ellipses (\S\ref{sec:exEllipse}).}
\label{table:ellipses}
\end{table}

\subsection{Mapping from a disconnected region}\label{sec:exSplitCircle}
We now return to the degenerate example considered in~\S\ref{sec:reg}.  This is the problem of mapping the two half-circles
\begin{multline*} X = \{(x_1,x_2)\mid x_1 < -0.1,(x_1+0.1)^2+x_2^2 < 0.3^2 \} \\ \cup \{(x_1,x_2)\mid x_1 > 0.1,(x_1-0.1)^2+x_2^2 < 0.3^2 \}\end{multline*}
onto the circle
\[ Y = \{(y_1,y_2)\mid y_1^2+y_2^2<0.3^2\}.\]

Results are presented in \autoref{table:split} and \autoref{fig:split}.  In this case, the error appears to approach a constant value of around 0.004.  This is not surprising since the monotone scheme is needed in the region where $f$ vanishes or is discontinuous.  The width of the stencil then limits the accuracy of solutions; this point is explained more fully in~\cite{FOTheory}.  The computation time for this very degenerate example is about $\bO(M^{1.3})$.

\begin{figure}[htdp]
	\centering
	\subfigure[]{\includegraphics[width=.4\textwidth]{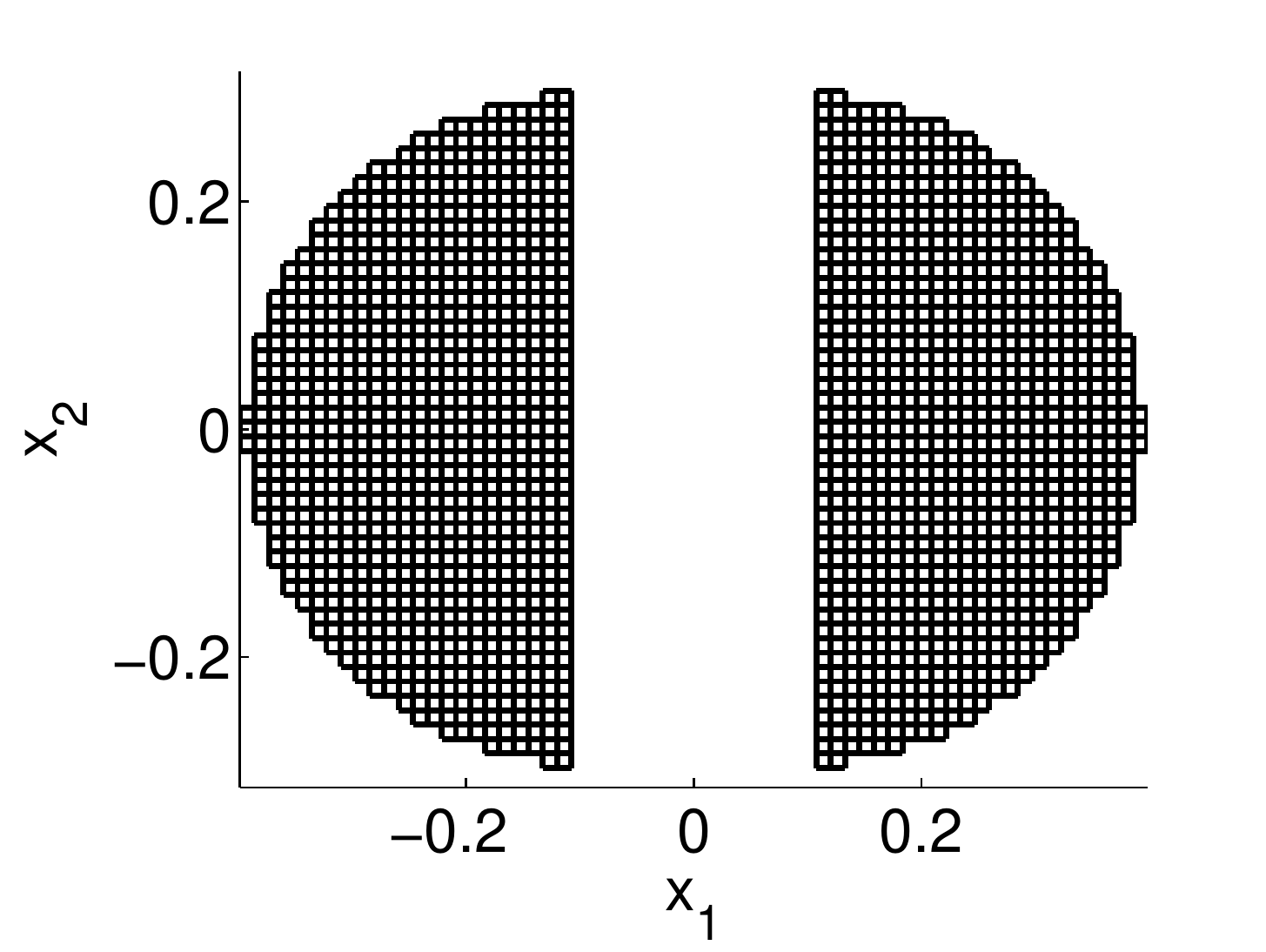}\label{fig:splitX}}
        \subfigure[]{\includegraphics[width=.4\textwidth]{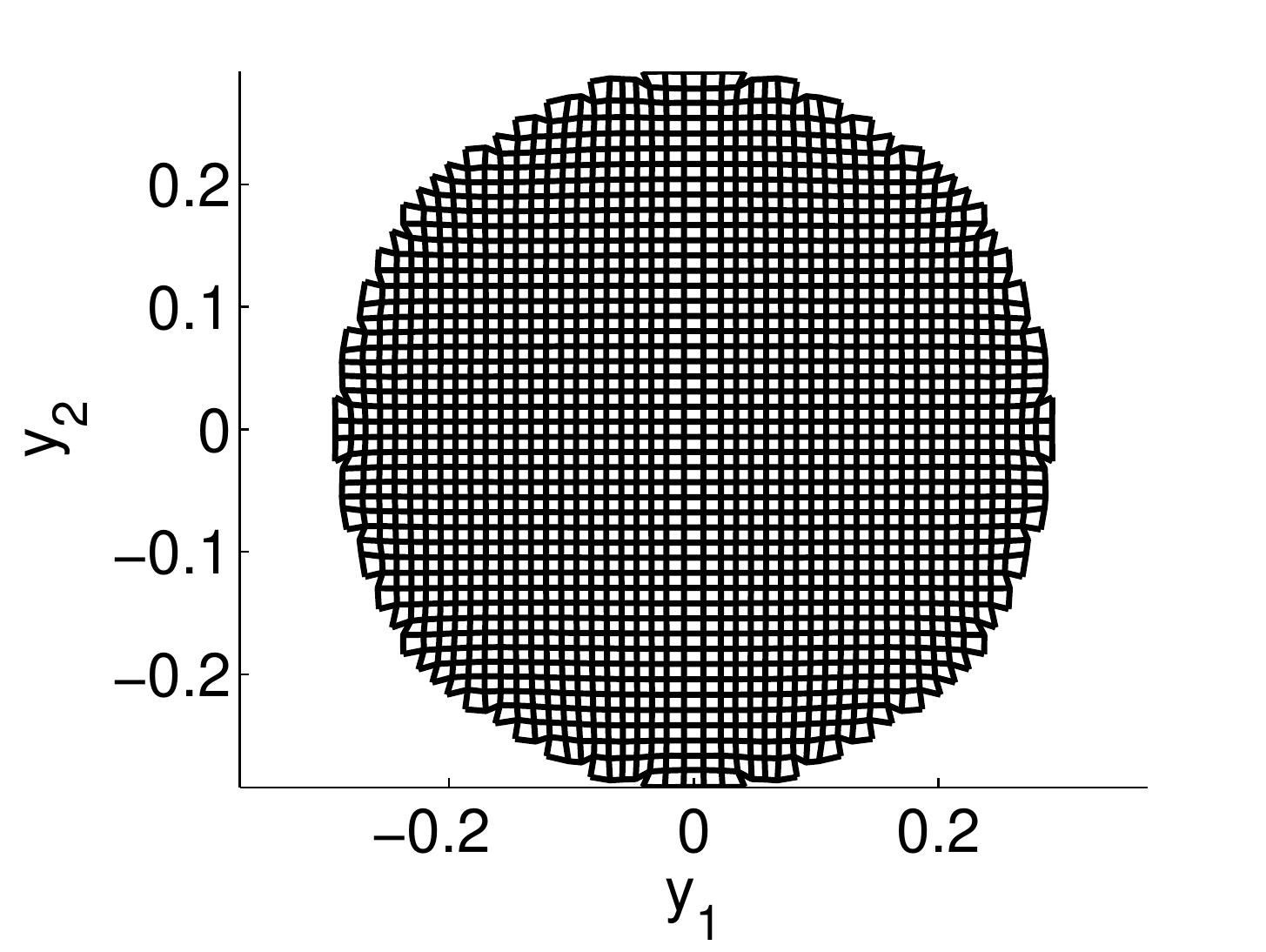}\label{fig:splitY}}
  	\vspace*{-12pt}\caption{\subref{fig:splitX} A cartesian mesh in two half-circles $X$ and \subref{fig:splitY} its image under the gradient map $\nabla u$~(\S\ref{sec:exSplitCircle}).}
  	\label{fig:split}
\end{figure} 

\begin{table}[htdp]\small
\begin{center}
\begin{tabular}{ccccc}
N  & h & \eqref{eq:MA} Solves & CPU Time (s) & {Maximum Error} \\
\hline
32 & 0.0323 & 5 & 0.5 &  0.0171\\
46 & 0.0222 & 2 & 0.5 &  0.0160\\
64 & 0.0159 &  5 & 1.6 & 0.0129\\
90 & 0.0112 &  9 & 6.0 & 0.0082\\
128 & 0.0079 & 5 & 11.8 & 0.0052\\
182 & 0.0055 & 4 & 30.3 & 0.0040\\
256 & 0.0039 & 3 & 66.7 & 0.0038
\end{tabular}
\end{center}
\caption{Computation time and maximum error for the map from two half-circles to a circle (\S\ref{sec:exSplitCircle}).}
\label{table:split}
\end{table}

\subsection{Mapping to a convex polygon}\label{sec:exPol}
Next we consider a map onto a convex polygon $Y$, which has a very non-smooth boundary.   We use the polygon $Y$ with vertices:
\[ (-0.5,-0.3),\,(-0.5,0.4),\,(0,0.5),\,(0.5,0.3),\,(0.3,-0.5). \]
Despite the non-smoothness of $\partial Y$, our method successfully maps the square $(-0.5,0.5)\times(-0.5,0.5)$ into the prescribed polygon, though we do not have an exact solution to compare with.

We also compute this map by solving the inverse problem (mapping the polygon to the square) and inverting the map as in~\S\ref{sec:inverse}.  While no exact solution is available for comparison, we can check the maximum difference between components of the two maps:
\[ \max\{\|u_{x_1} - u_{x_1}^{inv}\|_\infty, \|u_{x_2}-u_{x_2}^{inv}\|_\infty\}. \]

Results are presented in \autoref{table:pol} and \autoref{fig:pol}.  The computation is reasonably efficient, requiring about $\bO(M^{1.2})$ time for both the forward and inverse problem.  We also observe that the agreement between the maps obtained from the forward and inverse approaches improves as we refine the grid.

\begin{figure}[htdp]
	\centering
	\subfigure[]{\includegraphics[width=.4\textwidth]{squareX}\label{fig:polX}}
        \subfigure[]{\includegraphics[width=.4\textwidth]{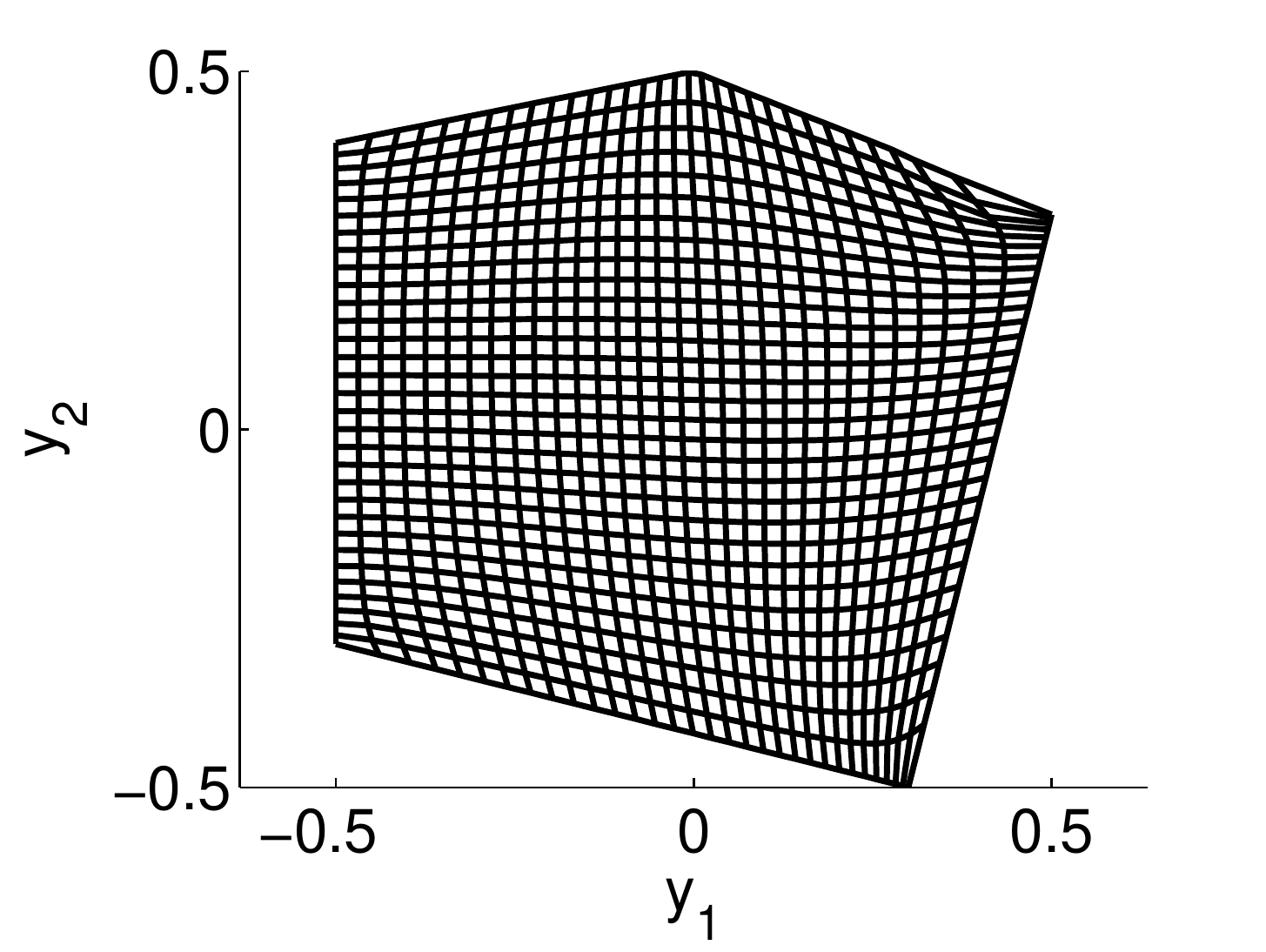}\label{fig:polY}}
  	\vspace*{-12pt}\caption{\subref{fig:polX} A cartesian mesh and \subref{fig:polY} its image under the gradient map $\nabla u$~(\S\ref{sec:exPol}).}
  	\label{fig:pol}
\end{figure} 

\begin{table}[htdp]\small
\begin{center}
\begin{tabular}{cccccc}
 &  \multicolumn{2}{c}{Forward Problem} & \multicolumn{2}{c}{Inverse Problem} \\
N  &  Iterations & Time (s)  & Iterations & Time (s) & Max Difference\\
\hline
32 & 3 & 0.4 & 1 & 0.3 & 0.0397\\
46 & 3 & 0.8 & 1 & 0.7 & 0.0227\\
64 & 3 & 1.5 & 1 & 1.1 & 0.0153\\
90 &  4 & 3.2 & 1 & 2.3 & 0.0119\\
128 & 4 & 8.5 & 1 & 6.2 & 0.0087\\
182 &  4 & 21.0 & 1 & 13.5 & 0.0063\\
256 & 4& 61.8 & 1 & 33.9 & 0.0050\\
362 & 4 & 154.3 & 1 & 92.6 & 0.0044
\end{tabular}
\end{center}
\caption{\MA solves, computation time and maximum difference for a map from square to polygon obtained by a direct solve and by inverting the inverse map (\S\ref{sec:exPol}).}
\label{table:pol}
\end{table}

\subsection{Mapping to a non-convex region}\label{sec:exNonconvex}
Finally, we compute the mapping of the square with constant density $f$ onto a non-convex region given by
\[ Y = \left\{(y_1,y_2) \mid 0 < y_1 < 1,\, 0 < y_2< 1-0.1\sin(2\pi y_1)\right\}. \]
We impose the following periodic density in the region $Y$:
\[  g(y_1,y_2) = 2 + \cos\left(8\pi\sqrt{(y_1-0.5)^2+(y_2-0.5)^2}\right).\]

The results are displayed in \autoref{table:Nonconvex} and \autoref{fig:Nonconvex}.  Despite the non-convexity of $Y$, the method successfully maps the region $X$ into the non-convex region $Y$.  The non-convexity does not appear to affect the computation time at all: the solution time is roughly $\bO(M)$. 

\begin{figure}[htdp]
	\centering
	\subfigure[]{\includegraphics[width=.4\textwidth]{squareX}\label{fig:nonconX}}
        \subfigure[]{\includegraphics[width=.4\textwidth]{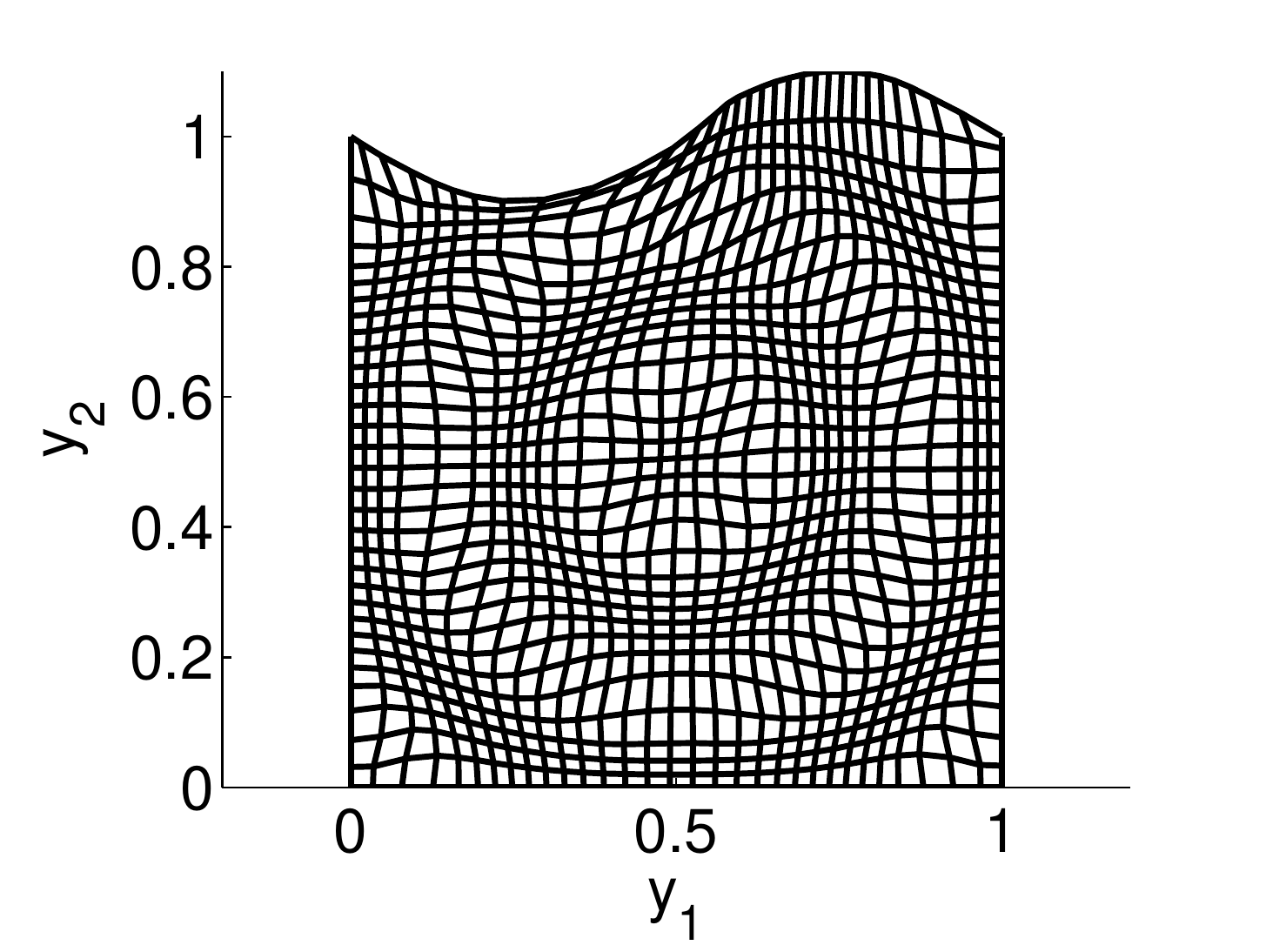}\label{fig:nonconY}}
  	\vspace*{-12pt}\caption{\subref{fig:nonconX} A cartesian mesh and \subref{fig:nonconY} its image under the gradient map $\nabla u$~(\S\ref{sec:exNonconvex}).}
  	\label{fig:Nonconvex}  	
\end{figure} 

\begin{table}[htdp]\small
\begin{center}
\begin{tabular}{ccc}
N   & \eqref{eq:MA} Solves & CPU Time (s)  \\
\hline
32 & 5 & 1.7\\
46 & 4 & 2.2\\
64 & 4 & 5.4\\
90 & 5 & 8.4\\
128 & 5 & 21.4\\
182 & 5 & 41.9\\
256 & 3 & 68.1\\
362 & 3 & 197.4
\end{tabular}
\end{center}
\caption{Computation time for the map onto a non-convex region (\S\ref{sec:exNonconvex}).}
\label{table:Nonconvex}
\end{table}

\section{Conclusions}\label{sec:conclude}
In this paper, we proposed a numerical method for solving the \MA equation with transport boundary conditions.  The method we described requires the solution of a sequence of \MA equations with Neumann boundary conditions.  In practice, we observed that for sufficiently large problems, the number of \MA solves was independent of or even decreased with the size of the problem.  No more than 13 iterations were required in any of the examples considered in this work.

We also presented a discretization of the \MA equation that extends the work of~\cite{FOTheory} to the more general setting where the right-hand side can depend on the gradient of the solution.  We proved that the solution of the discretized system converges to the viscosity solution of the equation and described Newton's method for solving the resulting system efficiently.  

Finally, we provided computational results for several challenging and representative examples including recovering inverse maps, mapping onto an unbounded density, mapping onto a non-convex target set, mapping from a disconnected domain, and mapping between images.  In each case, our method successfully and efficiently ($\bO(M)$-$\bO(M^{1.3})$ time) computed a map into the specified target set.

\newcommand{\etalchar}[1]{$^{#1}$}
\def\polhk#1{\setbox0=\hbox{#1}{\ooalign{\hidewidth
  \lower1.5ex\hbox{`}\hidewidth\crcr\unhbox0}}}

\end{document}